\tikzstyle{var}=[ellipse,thick,draw=black,minimum size=1.2cm]
\theoremstyle{definition}
\newtheorem{proposition}{Proposition}
\newtheorem{lemma}[proposition]{Lemma}
\newtheorem{corollary}[proposition]{Corollary}
\newtheorem{example}[proposition]{Example}
\begin{document}

\title{Tutorial on Exact Belief Propagation in Bayesian Networks: from Messages to Algorithms.}

\author{Gregory Nuel~\footnote{Institute of Mathematics (INSMI), CNRS, Dept. of Applied Mathematics (MAP5), Paris Descartes University, France. {\tt gregory.nuel@parisdescartes.fr}}}

\date{January, 2012}



\maketitle

\begin{abstract}
In Bayesian networks, exact belief propagation is achieved through message passing algorithms. These algorithms (ex: inward and outward) provide only a recursive definition of the corresponding messages. In contrast, when working on hidden Markov models and variants, one classically first defines explicitly these messages (forward and backward quantities), and then derive all results and algorithms. In this paper, we generalize the hidden Markov model approach by introducing an explicit definition of the messages in Bayesian networks, from which we derive all the relevant properties and results including the recursive algorithms that allow to compute these messages. Two didactic examples (the precipitation hidden Markov model and the pedigree Bayesian network) are considered along the paper to illustrate the new formalism and standalone R source code is provided in the appendix.

\end{abstract}

\section{Introduction}

Probabilistic graphical models (PGMs) are powerful and versatile tools to study complex random systems with many variables \citep{cowell99,jensen07,koller09}. Causal PGMs are called Bayesian Networks (BNTs) and can be seen as a generalization of Markov models like Markov chains, Hidden Markov Models (HMMs), or Markov trees  \citep{smyth97}. For these models, exact inference usually involves the so-called \emph{forward} and \emph{backward} quantities which can be use to obtain marginal or conditional distributions. From the definition of these quantities one can derive recursive formulas that allow to obtain them through linear algorithms \citep{durbin98}.

In the case of BNTs, the same tasks is conducted through the exact Belief Propagation (BP) first introduced by \citet{pearl86,pearl88} for singly connected graphs and then generalized to multiply connected graphs by a serie of articles \citep{lauritzen88,shafer90,jensen90bis,jensen90}. Although many variants exist \citep{lepar98,schmidt98}, the principle of exact BP is always basically the same: 1) compute the so-called \emph{messages} through a recursive algorithm, 2) then combine them to obtain marginal or conditional distributions. As pointed out by \citet{smyth97}, these messages corresponds in fact exactly to the forward and backward quantities in the particular case of HMMs. However, there is a noticeable difference: in HMMs, messages are first defined explicitly and then used to derive results and algorithms, while with exact BP, messages are implicitly defined as the results of the recursion algorithms.

The objective of the present work is to push a step forward the parallel between HMMs and BNTs by introducing a new formalism where we first give an explicit sense to the messages from which all results, recursions, and algorithms can then be derived. 

The paper is organized as follows: in Section~\ref{section:hmm} we first consider a simple HMM example (the precipitation HMM) that will illustrate the message orientated approach of these models. In Section~\ref{section:bnt} we do some recalls on BNTs, the notion evidence, and junction tree. We also introduce a small but illustrative BNT example (the pedigree BNT). Finally in Section~\ref{section:results} we present our new results: the explicit definition of the message functions and how the classical results and algorithms derive from this definition. All results are illustrated both with the precipitation HMM and the pedigree BNT and standalone R source code is provided in the appendix. We end by discussing the possible advantages of this new approach.

\begin{table}[b]
\caption{Distribution of $Y_i$ conditionally to $S_i$ in the precipitation HMM.}
\label{tab:poisson}
$
{
\setlength\arraycolsep{5pt}
\begin{array}{cccccccccccc}
\hline
k & 0 & 1 & 2 & 3 & 4 & 5 & 6 & 7 & 8 & 9 & 10 \\
\hline
\mathbb{P}(Y_i=k | S_i=\text{L}) & 
.050& .149& .224& .224& .168& .101& .050& .022& .008& .003& .001\\
\mathbb{P}(Y_i=k | S_i=\text{H}) & 
.607& .303& .076& .013& .002& .000& .000& .000& .000& .000& .000\\
\hline
\end{array}
}
$
\end{table}

\section{Precipitation HMM}\label{section:hmm}

Let us assume that we observe daily the mm of precipitation at a given location. These measurements obviously depend on the atmospheric conditions. For simplification purpose, we consider only two possible atmospheric conditions: low pressure (denoted ${\tt L}$) and high pressure (denote ${\tt H}$). For $i=1,\ldots,n$, we denote by $Y_i$ the mm of precipitation observed at day $i$ and by $S_i$ the atmospheric conditions the same day, and we assume:
\begin{enumerate}[i)]
	\item $S_{1:n}=(S_i)_{i=1\ldots,n}$ is an homogeneous Markov chain starting with $S_1={\tt H}$, and with transition probabilities given by $\mathbb{P}(S_i={\tt L} | S_{i-1}={\tt H})=0.3$,  $\mathbb{P}(S_i={\tt H} | S_{i-1}={\tt L})=0.1$;
	\item $Y_{1;n}=(Y_i)_{i=1\ldots,n}$ is a independent sample of Poisson variables whose parameter only depends on $S_i$: $\mathbb{E}[Y_i | S_i={\tt L}]=3.0$ and $\mathbb{E}[Y_i | S_i={\tt H}]=0.5$ (see Tab.~\ref{tab:poisson}).
\end{enumerate}

We hence have:
\begin{equation}\label{eq:hmm_model}
\mathbb{P}(Y_{1:n},S_{1:n})=\mathbb{P}(S_1)\mathbb{P}(Y_1|S_1)\prod_{i=2}^n \mathbb{P}(S_i|S_{i-1})\mathbb{P}(Y_i|S_i)
\end{equation}

If $Y_{1:n}$ is observed while $S_{1:n}$ is not, this results in a typical HMM where there is a trend to have more precipitations in period of low atmospheric pressure. Our objective is to study $\mathbb{P}(S_{1:n} | Y_{1:n})$ the distribution of the unobserved phenomenon (the atmospheric pressure) conditionally to the observations (the mm of precipitation).

Following the classical approach to this problem \citep{durbin98}, we first introduce the so called \emph{forward} and \emph{backward} quantities, respectively defined for all $s\in \{{\tt L},{\tt H}\}$ and for $i=1\ldots n$ by:
\begin{equation}\label{eq:hmm_fb}
F_i(s)\stackrel{\text{def}}{=}\mathbb{P}(S_i=s,Y_{1:i}) \\
\quad\text{and}\quad
B_i(s)\stackrel{\text{def}}{=}\mathbb{P}(Y_{i+1:n} |S_i=s)
\end{equation}
with the convention that $B_n(s)=1$. The critical point is then just to prove the following proposition:

\begin{figure*}[t]
\begin{center}
\includegraphics[width=0.9\textwidth]{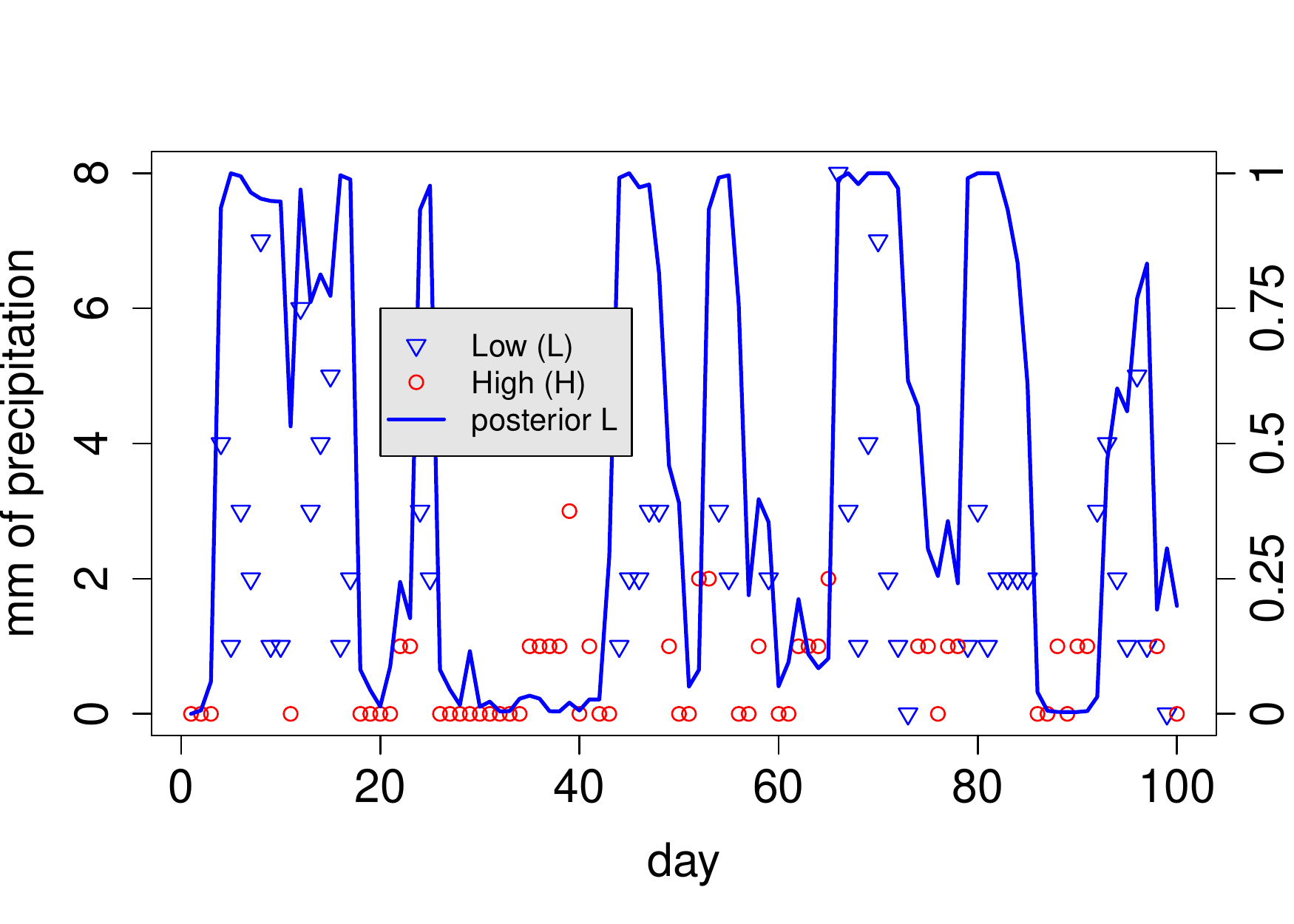}
\end{center}
\caption{Observation of the precipitation HMM over $n=100$ days. The (observed) mm of precipitation are given by the dots (scale on the left axis), the (hidden) status of states $S_i$ are given by the shape of the dots, and the posterior probability $\mathbb{P}(S_i={\tt L}|Y_{1:n})$ are given by the solid line (scale on the right axis).}
\label{fig:posterior_hmm}
\end{figure*}

\begin{proposition}
For all $i=1\ldots n$ and for all $r,s \in \{{\tt L},{\tt H}\}$ we have:
\begin{equation} \label{eq:hmm_marginal1}
\mathbb{P}(S_{i}=s,Y_{1:n})=F_{i}(s)B_{i}(s)
\end{equation}
and
\begin{equation} \label{eq:hmm_marginal2}
\mathbb{P}(S_{i-1}=r,S_{i}=s,Y_{1:n})=F_{i-1}(r)\pi(r,s)e_s(Y_i)B_{i}(s)
\end{equation}
where $\pi(r,s)\stackrel{\text{def}}{=}\mathbb{P}(S_{i}=s | S_{i-1}=r)$ and $e_s(k)\stackrel{\text{def}}{=}\mathbb{P}(Y_i=k | S_i=s)$.
\end{proposition}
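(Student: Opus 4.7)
The plan is to derive both identities by applying the chain rule to the joint $\mathbb{P}(S_i=s,Y_{1:n})$ (resp.\ $\mathbb{P}(S_{i-1}=r,S_i=s,Y_{1:n})$) and then simplifying the resulting conditional factors via the conditional independence properties built into the HMM factorization \eqref{eq:hmm_model}. All the work reduces to recognizing, in each conditional, which of the conditioning variables are actually relevant given the Markov structure.

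For \eqref{eq:hmm_marginal1}, I would start by splitting the observations at index $i$:
\begin{equation*}
\mathbb{P}(S_i=s,Y_{1:n}) = \mathbb{P}(S_i=s,Y_{1:i}) \cdot \mathbb{P}(Y_{i+1:n}\mid S_i=s,Y_{1:i}).
\end{equation*}
The first factor is $F_i(s)$ by definition. The key step is to show that, conditionally on $S_i$, the past observations $Y_{1:i}$ carry no information about the future observations $Y_{i+1:n}$, so that the second factor collapses to $B_i(s)$. This independence can be proven directly by plugging \eqref{eq:hmm_model} into the definition of the conditional probability and summing out $S_{1:i-1}$ and $S_{i+1:n}$: everything that depends on $Y_{1:i}$ factors out of the numerator and cancels with the denominator.

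For \eqref{eq:hmm_marginal2}, I would apply the chain rule more finely:
\begin{equation*}
\mathbb{P}(S_{i-1}=r,S_i=s,Y_{1:n}) = \mathbb{P}(S_{i-1}=r,Y_{1:i-1})\,\mathbb{P}(S_i=s\mid S_{i-1}=r,Y_{1:i-1})\,\mathbb{P}(Y_i\mid S_i=s,S_{i-1}=r,Y_{1:i-1})\,\mathbb{P}(Y_{i+1:n}\mid S_i=s,S_{i-1}=r,Y_{1:i}).
\end{equation*}
The first factor is $F_{i-1}(r)$. The three remaining conditionals simplify, by the same kind of d-separation / direct-summation argument as above, to $\pi(r,s)$, $e_s(Y_i)$, and $B_i(s)$ respectively, using the Markov property of $S_{1:n}$ and the emission independence of the $Y_i$'s.

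The only genuine difficulty is the conditional-independence step; once it is established the identities are immediate. I would state it as a single lemma — ``given $S_i$, the triples $(S_{1:i-1},Y_{1:i})$ and $(S_{i+1:n},Y_{i+1:n})$ are independent'' — and prove it cleanly from \eqref{eq:hmm_model} by grouping the factors on either side of index $i$ and normalizing. All the required simplifications in both \eqref{eq:hmm_marginal1} and \eqref{eq:hmm_marginal2} are then straightforward corollaries.
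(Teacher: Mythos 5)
Your argument is correct: all the chain-rule factors and conditional-independence simplifications you invoke (the Markov property of $S_{1:n}$, the emission independence of the $Y_i$'s, and the independence of past and future given $S_i$) do hold under \eqref{eq:hmm_model}, so both \eqref{eq:hmm_marginal1} and \eqref{eq:hmm_marginal2} follow. The arrangement, however, differs from the paper's: you apply the chain rule to the marginal $\mathbb{P}(S_i=s,Y_{1:n})$ and then must discharge conditional-independence claims such as $\mathbb{P}(Y_{i+1:n}\mid S_i=s,Y_{1:i})=B_i(s)$, whereas the paper works one level up, with the full joint. It splits the product in \eqref{eq:hmm_model} at index $i$ as $\mathbb{P}(S_{1:n},Y_{1:n})=\mathbb{P}(S_{1:i},Y_{1:i})\,\mathbb{P}(S_{i+1:n},Y_{i+1:n}\mid S_i)$ and then marginalizes over $S_{1:i-1}$ and $S_{i+1:n}$, so $F_i(s)B_i(s)$ appears directly as a product of two separate sums, with no conditioning, division, or cancellation step. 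Note that the lemma you isolate (given $S_i$, the blocks $(S_{1:i-1},Y_{1:i})$ and $(S_{i+1:n},Y_{i+1:n})$ are independent) is precisely that factorization restated, so the two proofs rest on the same structural fact about \eqref{eq:hmm_model}, just in opposite order of operations: factorize-then-marginalize (paper) versus marginalize-then-decondition (yours). The paper's order is a little more economical and sidesteps conditioning on possibly null events; your chain-rule-plus-d-separation presentation is more modular, since the single independence lemma also drives \eqref{eq:hmm_marginal2}, at the price of a four-factor decomposition whose individual conditionals each need to be checked (and ultimately proved by the same grouping of the factors of \eqref{eq:hmm_model}).
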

\begin{proof}
	We prove only Eq.~(\ref{eq:hmm_marginal1}) since the argument is similar for Eq.~(\ref{eq:hmm_marginal2}). Thanks to Eq.~(\ref{eq:hmm_model}) we first observe that:
	$$
	\mathbb{P}(S_{1:n},Y_{1:n})=\mathbb{P}(S_{1:i},Y_{1:i})\mathbb{P}(S_{i+1:n},Y_{i+1:n} | S_i)
	$$
	from which a simple marginalization gives us:
	\begin{eqnarray*}
	\mathbb{P}(S_{i}=s,Y_{1:n})&=& \sum_{S_{1:i-1}} \sum_{S_{i+1:n}} \mathbb{P}(S_{1:i-1},S_{i}=s,Y_{1:i})
	\mathbb{P}(S_{i+1:n},Y_{i+1:n} |S_{i}=s) \\
	&=&\underbrace{\sum_{S_{1:i-1}}\mathbb{P}(S_{1:i-1},S_{i}=s,Y_{1:i})}_{F_i(s)}
		\underbrace{\sum_{S_{i+1:n}}\mathbb{P}(S_{i+1:n},Y_{i+1:n} |S_{i}=s)}_{B_i(s)}.
	\end{eqnarray*}
\end{proof}

From this proposition, we can easily establish all the classical results of HMM inference.

\begin{corollary}[forward and backward recursions]
The forward quantities can be recursively computed from $F_1(s)=\mathbf{1}_{s=\text{H}}e_s(Y_1)$ for all $i=2\ldots n$ with:
\begin{equation}\label{eq:hmm_forward_recursion}
F_{i}(s)=\sum_{r} F_{i-1}(r)\pi(r,s)e_s(Y_i)
\end{equation}
and similarly, the backward quantities can be recursively computed from $B_n(s)=1$ for all $i=n\ldots 2$:
\begin{equation}\label{eq:hmm_backward_recursion}
B_{i-1}(r)=\sum_{s} \pi(r,s)e_s(Y_{i})B_{i}(s).
\end{equation}
\end{corollary}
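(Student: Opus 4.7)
The plan is to derive both recursions directly from the definitions in Eq.~(\ref{eq:hmm_fb}), using only the joint factorization~(\ref{eq:hmm_model}) and elementary marginalization. Both halves will follow the same template: enlarge the event to include one extra hidden state (the previous one for forward, the next one for backward), split the resulting joint probability using~(\ref{eq:hmm_model}), and recognize the shifted message as the residual factor.

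For the forward part, I would first dispose of the base case by specializing the definition to $i=1$: since $S_1=\text{H}$ deterministically, $F_1(s)=\mathbb{P}(S_1=s)\mathbb{P}(Y_1\mid S_1=s)=\mathbf{1}_{s=\text{H}}\,e_s(Y_1)$. For $i\geq 2$, I would write
\begin{equation*}
F_i(s)=\sum_r \sum_{S_{1:i-2}} \mathbb{P}(S_{1:i-2},S_{i-1}=r,S_i=s,Y_{1:i})
\end{equation*}
and then use~(\ref{eq:hmm_model}) to peel off the two last factors $\pi(r,s)\,e_s(Y_i)$, so that the inner sum collapses to $F_{i-1}(r)$ by definition.

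The backward step is symmetric. The initialization $B_n(s)=1$ follows from the empty-product convention. For the recursion, I would marginalize over $S_i$ inside the definition of $B_{i-1}(r)$ and use the conditional independences implied by~(\ref{eq:hmm_model}) to obtain
\begin{equation*}
\mathbb{P}(S_i=s,Y_{i:n}\mid S_{i-1}=r)=\pi(r,s)\,e_s(Y_i)\,\mathbb{P}(Y_{i+1:n}\mid S_i=s),
\end{equation*}
the last factor being exactly $B_i(s)$.

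The only slightly delicate point is the conditional independence used in the backward step, namely that $Y_{i+1:n}$ is independent of $(S_{i-1},Y_i)$ given $S_i$. I expect this to be the main (if minor) obstacle, but it is a one-line consequence of the product form~(\ref{eq:hmm_model}) and mirrors the marginalization trick already carried out in the proof of the preceding proposition, so no genuinely new machinery is required.
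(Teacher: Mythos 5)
Your derivation is correct, but it follows a genuinely different route from the paper. You go back to the explicit definitions in Eq.~(\ref{eq:hmm_fb}) and re-derive each recursion from scratch: enlarge the event by one hidden state, peel off the factors $\pi(r,s)e_s(Y_i)$ from the factorization~(\ref{eq:hmm_model}), and recognize the shifted quantity (plus, for the backward half, the conditional independence of $Y_{i+1:n}$ from $(S_{i-1},Y_i)$ given $S_i$, which indeed follows from the product form). The paper instead treats the corollary as a one-line consequence of the preceding proposition: it writes $\mathbb{P}(S_i=s,Y_{1:n})=\sum_r\mathbb{P}(S_{i-1}=r,S_i=s,Y_{1:n})$, substitutes Eq.~(\ref{eq:hmm_marginal1}) on the left and Eq.~(\ref{eq:hmm_marginal2}) on the right, and cancels the common factor $B_i(s)$. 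The trade-offs are worth noting. The paper's argument is shorter and, more importantly, is exactly the template that is reused later for the general junction-tree recursion Eq.~(\ref{eq:recursion}), which is the pedagogical point of the whole article: all recursions fall out of the marginal-decomposition propositions. Its minor cost is the division by $B_i(s)$, which implicitly requires $B_i(s)\neq 0$ (harmless here since the Poisson emission probabilities are strictly positive, but a caveat in general). Your direct derivation avoids any division and is self-contained, at the price of redoing the marginalization/conditional-independence work that Proposition~1 was designed to encapsulate, and of not previewing the mechanism used in the Bayesian-network sections.
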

\begin{proof}
We only prove the forward recursion. We simply start from
$$\mathbb{P}(S_i=s,Y_{1:n})=\sum_{r} \mathbb{P}(S_{i-1}=r,S_i=s,Y_{1:n})$$
and apply Eq.~(\ref{eq:hmm_marginal1}) on the left-hand term, and Eq.~(\ref{eq:hmm_marginal2}) on the right-hand term to obtain:
$$
F_i(s)B_i(s)= \sum_{r} F_{i-1}(r)\pi(r,s)e_s(Y_i)B_{i}(s)
$$
which gives the forward recursion by simplifying by $B_{i}(s)$.
\end{proof}

We can see on Fig.~\ref{fig:posterior_hmm} and example of data produced by the model over $n=100$ days. The posterior probability $\mathbb{P}(S_i={\tt L}|Y_{1:n})$ is quite consistent with the (unobserved) reference values of $S_i$. 

\begin{table}[t]
\caption{Five samples drawn from $\mathbb{P}(S_{40:60} | Y_{1:n})$ using the data of Fig.~\ref{fig:posterior_hmm}. The reference value of $S_{i}$ and the posterior marginal distribution $\mathbb{P}(S_i={\tt L} | Y_{1:n})$ are also given for $i=40\ldots 60$.}
\label{tab:hmm_samples}
{\small
$
{
\setlength\arraycolsep{2pt}
\begin{array}{cccccccccccccccccccccc}
\hline
\text{day} & 40 & 41 & 42 & 43 & 44 &45&46&47&48&49&50&51&52&53&54&55&56&57&58&59&60\\
\hline
\text{reference} &  {\tt H} & {\tt H} & {\tt H} & {\tt H} & {\tt L} & {\tt L} & {\tt L} & {\tt L} & {\tt L} & {\tt H} & {\tt H} & {\tt H} & {\tt H} & {\tt H} & {\tt L} & {\tt L} & {\tt H} & {\tt H} & {\tt H} & 
{\tt L} & {\tt H} \\
\hline
\text{sample 1} & {\tt H} & {\tt H} & {\tt H} & {\tt H} & {\tt L} & {\tt L} & {\tt L} & {\tt H} & {\tt H} & {\tt H} & {\tt H} & {\tt H} & {\tt H} & {\tt L} & {\tt L} & {\tt L} & {\tt L} & {\tt L} & {\tt H} & {\tt L} & {\tt H} \\
\text{sample 2} & {\tt H} & {\tt H} & {\tt H} & {\tt H} & {\tt L} & {\tt L} & {\tt L} & {\tt L} & {\tt L} & {\tt H} & {\tt L} & {\tt H} & {\tt L} & {\tt L} & {\tt L} & {\tt L} & {\tt L} & {\tt H} & {\tt L} & {\tt L} & {\tt L} \\
\text{sample 3} & {\tt H} & {\tt H} & {\tt H} & {\tt L} & {\tt L} & {\tt L} & {\tt L} & {\tt L} & {\tt H} & {\tt H} & {\tt H} & {\tt H} & {\tt H} & {\tt L} & {\tt L} & {\tt L} & {\tt L} & {\tt H} & {\tt H} & {\tt H} & {\tt H} \\
\text{sample 4} & {\tt H} & {\tt H} & {\tt H} & {\tt H} & {\tt L} & {\tt L} & {\tt L} & {\tt L} & {\tt L} & {\tt L} & {\tt L} & {\tt H} & {\tt H} & {\tt L} & {\tt L} & {\tt L} & {\tt L} & {\tt H} & {\tt L} & {\tt L} & {\tt H} \\
\text{sample 5} & {\tt H} & {\tt H} & {\tt H} & {\tt H} & {\tt L} & {\tt L} & {\tt L} & {\tt L} & {\tt L} & {\tt H} & {\tt H} & {\tt H} & {\tt H} & {\tt L} & {\tt L} & {\tt L} & {\tt L} & {\tt H} & {\tt L} & {\tt H} & {\tt H} \\
\hline
\text{porterior ${\tt L}$} & .01 &.03 &.03 &.29 &.99 & 1.0&.97 &.98 &.82 &.46 &.39 &.05 &.08 &.93 &.99  &
1.0&.76 &.22 &.40 &.35 &.05 \\
\hline
\end{array}
}
$
}
\end{table}

\begin{corollary}[forward and backward sampling]
The distribution of $S_{1:n}$ conditionally to $Y_{1:n}$ is an heterogeneous Markov chain whose transitions are given by
\begin{equation}\label{eq:hmm_forward_sampling}
\mathbb{P}(S_{i}=s | S_{i-1}=r, Y_{1:n})
=\frac{\pi(r,s)e_s(Y_i)B_{i}(s)}{B_{i-1}(r)}
\end{equation}
in the forward direction, and by
\begin{equation}\label{eq:hmm_backward_sampling}
\mathbb{P}(S_{i-1}=r | S_{i}=s, Y_{1:n})
=\frac{F_{i-1}(r)\pi(r,s)e_s(Y_i)}{F_{i}(s)}
\end{equation}
in the backward direction.
\end{corollary}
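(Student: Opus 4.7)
The plan is to derive both transition formulas directly from the marginal identities already established in the previous proposition via Bayes' rule, and then to argue separately that these conditional probabilities do indeed parametrize a Markov chain in each direction.

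For the forward transition, I would write
$$
\mathbb{P}(S_i = s\,|\,S_{i-1}=r, Y_{1:n}) = \frac{\mathbb{P}(S_{i-1}=r, S_i=s, Y_{1:n})}{\mathbb{P}(S_{i-1}=r, Y_{1:n})},
$$
apply Eq.~(\ref{eq:hmm_marginal2}) to the numerator and Eq.~(\ref{eq:hmm_marginal1}) (at index $i-1$) to the denominator, and cancel the common factor $F_{i-1}(r)$ to obtain Eq.~(\ref{eq:hmm_forward_sampling}). The backward identity is completely symmetric: the same numerator is now divided by $\mathbb{P}(S_i=s,Y_{1:n}) = F_i(s)B_i(s)$, and the common factor $B_i(s)$ cancels to yield Eq.~(\ref{eq:hmm_backward_sampling}).

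What remains is to justify that these quantities are the transition kernels of a heterogeneous Markov chain, i.e.\ that $\mathbb{P}(S_i\,|\,S_{1:i-1},Y_{1:n})=\mathbb{P}(S_i\,|\,S_{i-1},Y_{1:n})$ in the forward direction, and the analogous identity backwards. For this, I would go back to the HMM factorization (\ref{eq:hmm_model}): viewing $Y_{1:n}$ as fixed, $\mathbb{P}(S_{1:n}\,|\,Y_{1:n})$ is proportional to a product of factors each involving at most two consecutive states ($\pi(S_{i-1},S_i)$) or a single state ($e_{S_i}(Y_i)$, $\mathbb{P}(S_1)$, $\mathbb{P}(Y_1|S_1)$). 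This pairwise factor structure is precisely that of a Markov chain, which gives both the forward and backward Markov properties and identifies the transition kernels with the formulas just computed.

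The main potential obstacle is the Markov-property step, since the naive temptation is to quote conditional independence from the underlying HMM graph where one must remember that conditioning is on the entire sequence $Y_{1:n}$ (not just on past observations); however, the direct factorization argument above handles this cleanly without invoking d-separation machinery. The cancellation arguments themselves are purely routine.
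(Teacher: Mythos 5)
Your proof is correct and follows essentially the same route as the paper: apply Bayes' rule and then use Eq.~(\ref{eq:hmm_marginal2}) for the numerator and Eq.~(\ref{eq:hmm_marginal1}) for the denominator, with the common factor cancelling. Your additional factorization argument for the heterogeneous Markov property is a sound supplement to a point the paper's proof leaves implicit, but it does not change the underlying approach.
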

\begin{proof}
We prove only the forward direction. We simply start from
$$
\mathbb{P}(S_{i}=s | S_{i-1}=r, Y_{1:n}) = \frac{\mathbb{P}(S_{i-1}=r,S_{i}=s, Y_{1:n})}{\mathbb{P}(S_{i-1}=r, Y_{1:n})}
$$
and use Eq.~(\ref{eq:hmm_marginal2}) on the numerator, and Eq.~(\ref{eq:hmm_marginal1}) on the denominator.
\end{proof}

For example, we can see on Tab.~\ref{tab:hmm_samples} some samples drawn from $\mathbb{P}(S_{1:n} | Y_{1:n})$ using the previous corollary.

%
%
%
%
%
%
%

\section{Recalls on Bayesian Networks}\label{section:bnt}

\subsection{Model}

Let $X_\mathcal{U}=(X_u)_{u \in \mathcal{U}}$, $\mathcal{U}=\{1,\ldots,p\}$ be a set of $p$ discrete\footnote{It is possible to consider continuous variables as well (or even a mixture of discrete and continuous variables) by replacing everywhere probabilities by densities, and sums by integrals. For the sake of simplicity, we here restrict ourselves to the pure discrete case.} random variables such as, for all $u\in \mathcal{U}$, $X_u \in \mathcal{D}_u \subset \mathbb{R}^{d_u}$ ($d_u \in \mathbb{N}^*$). Let $\mathcal{F} \subset \mathcal{U}\times \mathcal{U}$ such that $(\mathcal{U},\mathcal{F})$ define a directed acyclic graph (DAG) over $\mathcal{U}$. For all $v \in \mathcal{U}$, we define the \emph{parent set} of $v$ as $\text{pa}(v)\stackrel{\text{def}}{=}\{u\in \mathcal{U},(u,v)\in \mathcal{F}\}$. Then the distribution of $X_\mathcal{U} \in \mathcal{D}_\mathcal{U}$ is given by:
\begin{equation}\label{eq:model}
\mathbb{P}\left(X_\mathcal{U}\right) \stackrel{\text{def}}{=} \prod_{u \in \mathcal{U}}
\mathbb{P}\left( X_u | X_{\text{pa}(u)}\right).
\end{equation}
Note that Eq.~(\ref{eq:model}) defines a probability thanks to the acyclic property of graph $(\mathcal{U},\mathcal{F})$. Such a model is called a \emph{Bayesian network} (BNT) due to the fact the distribution of $X_\mathcal{U}$ is defined only through the conditional distributions $\mathbb{P}\left( X_u | X_{\text{pa}(u)}\right)$.

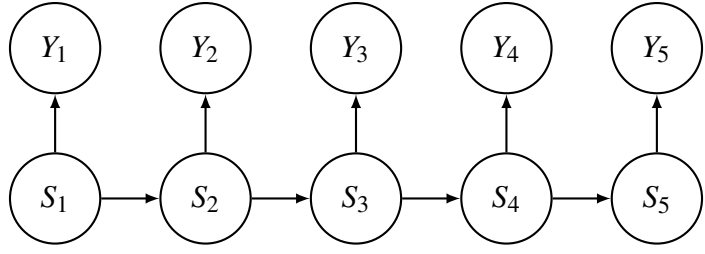
\begin{figure}[t]
\begin{center}
\begin{tikzpicture}[>=latex,text height=1.5ex,text depth=0.25ex,scale=1.0]
	\draw (0,0) node (S1) [var] {$S_1$};
	\draw (2,0) node (S2) [var] {$S_2$};
	\draw (4,0) node (S3) [var] {$S_3$};
	\draw (6,0) node (S4) [var] {$S_4$};
	\draw (8,0) node (S5) [var] {$S_5$};
	\draw (0,2) node (X1) [var] {$Y_1$};
	\draw (2,2) node (X2) [var] {$Y_2$};
	\draw (4,2) node (X3) [var] {$Y_3$};
	\draw (6,2) node (X4) [var] {$Y_4$};
	\draw (8,2) node (X5) [var] {$Y_5$};
	\path[->]
        (S1) edge[thick] (X1)
        (S2) edge[thick] (X2)
        (S3) edge[thick] (X3)
        (S4) edge[thick] (X4)
        (S5) edge[thick] (X5)
        (S1) edge[thick] (S2)
        (S2) edge[thick] (S3)
        (S3) edge[thick] (S4)
        (S4) edge[thick] (S5)
        ;
\end{tikzpicture}
\end{center}
\caption{DAG representing the precipitation HMM with $n=5$.}
\label{fig:dag_hmm}
\end{figure}

\begin{example}
In the particular case of the precipitation HMM over $n=100$ days we get the DAG of Fig.~\ref{fig:dag_hmm}.
If we denote the variable with $\mathcal{U}=\{-n,\ldots,-1\}\cup\{1,\ldots,n\}$ ($p=2n$), then for all $i=1\ldots n$ we have $X_{i}=S_i$ ($\mathcal{D}_{i}=\{{\tt L},{\tt H}\}$), $X_{-i}=Y_{i}$ ($\mathcal{D}_{-i}=\mathbb{N}$). We hence get the following parent sets: $\text{pa}(1)=\emptyset$, $\text{pa}(i)=\{i-1\}$ for $i=2\ldots n$, and $\text{pa}(-i)=\{i\}$ for $i=1\ldots n$. Note that replacing the generic variables by their values in Eq.~(\ref{eq:model}) immediately gives Eq.~(\ref{eq:hmm_model}).
\end{example}

\begin{example}
We can see on Fig.~\ref{fig:bnt_toy} a slightly more complex BNT which represents the parental relationships (a pedigree) of 10 individuals. This BNT includes a loop (consanguinity relationship between two cousins) but no orientated cycles.

The distribution of $X_{1:10}$ is hence given by
\begin{multline*}
\mathbb{P}(X_{1:10})=
\mathbb{P}(X_1)\mathbb{P}(X_2)\mathbb{P}(X_3|X_{1,2})\mathbb{P}(X_4|X_{1,2})\\ 
\mathbb{P}(X_5) \mathbb{P}(X_6) \mathbb{P}(X_7|X_{3,5})\mathbb{P}(X_8|X_{3,5})\mathbb{P}(X_9|X_{4,6})\mathbb{P}(X_{10}|X_{7,9}).
\end{multline*}

For all $i$, $X_i$ represents the genotype of individual $i$ at a given disease locus. We consider that there is only two alleles: the disease allele ${\tt D}$ and the non disease allele ${\tt d}$. $X_i$ hence takes its value in the following set of genotypes: $\{{\tt dd},{\tt dD},{\tt DD}\}$ (note that genotypes ${\tt dD}$ and ${\tt Dd}$ are indistinguishable).

For $i \in  \{1,2,5,6\}$ (the \emph{founders} set -- individuals with no parents), we assume a $20\%$ frequency for the disease allele in the general population and we get: $\mathbb{P}(X_i={\tt dd})=0.64$, $\mathbb{P}(X_i={\tt dD})=0.32$, and $\mathbb{P}(X_i={\tt DD})=0.04$. For any other individual $k$, we denote by $i$ and $j$ its two parents, and according to the Mendelian transmission of alleles we get the following conditional distribution:
$$
\begin{array}{cccccccccc}
\hline
X_i,X_j & {\tt dd},{\tt dd} & {\tt dd},{\tt dD} & {\tt dd},{\tt DD} & {\tt dD},{\tt dd} & {\tt dD},{\tt dD} &  {\tt dD},{\tt DD} & {\tt DD},{\tt dd} & {\tt DD},{\tt dD} & {\tt DD},{\tt DD}\\
\hline
\mathbb{P}(X_k={\tt dd}|X_i,X_j) & 1.00 & 0.50 & 0.00 & 0.50 & 0.25 & 0.00 & 0.00 & 0.00 & 0.00\\ 
\mathbb{P}(X_k={\tt dD}|X_i,X_j) & 0.00 & 0.50 & 1.00 & 0.50 & 0.50 & 0.50 & 1.00 & 0.50 & 0.00\\
\mathbb{P}(X_k={\tt DD}|X_i,X_j) & 0.00 & 0.00 & 0.00 & 0.00 & 0.25 & 0.50 & 0.00 & 0.50 & 1.00\\
\hline
\end{array}.
$$
\end{example}


\begin{figure}[t]
\begin{center}
\begin{tikzpicture}[>=latex,text height=1.5ex,text depth=0.25ex,scale=1.0]
	\draw (1,0) node (X1) [var] {$X_1$};
	\draw (5,0) node (X2) [var] {$X_2$};
	\draw (0,-2) node (X4) [var] {$X_4$};
	\draw (2,-2) node (X6) [var] {$X_6$};
	\draw (4,-2) node (X5) [var] {$X_5$};
	\draw (6,-2) node (X3) [var] {$X_3$};
	\draw (1,-4) node (X9) [var] {$X_9$};
	\draw (3,-4) node (X8) [var] {$X_8$};
	\draw (5,-4) node (X7) [var] {$X_7$};
	\draw (3,-6) node (X10) [var] {$X_{10}$};
	\path[->]
        (X1) edge[thick] (X4)
        (X1) edge[thick] (X3)
        (X2) edge[thick] (X4)
        (X2) edge[thick] (X3)
        (X3) edge[thick] (X7)
        (X3) edge[thick] (X8)
        (X4) edge[thick] (X9)
        (X6) edge[thick] (X9)
        (X5) edge[thick] (X7)
        (X5) edge[thick] (X8)
        (X9) edge[thick] (X10)
        (X7) edge[thick] (X10)
        ;
\end{tikzpicture}
\end{center}
\caption{Pedigree BNT of 10 individuals with a consanguinity loop between Individual~$7$ and Individual~$9$ (two cousins).}
\label{fig:bnt_toy}
\end{figure}
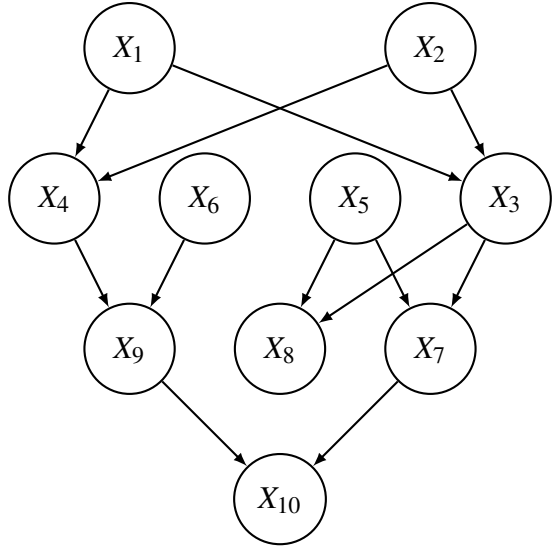

\subsection{Evidence}

We introduce the notion of \emph{evidence} by considering for all $u \in \mathcal{U}$ a subset $\mathcal{X}_u \subset \mathcal{D}_u$ of possible outcomes for $X_u$. For any $\mathcal{V} \subset \mathcal{U}$, we define $\mathcal{E}_\mathcal{V}\stackrel{\text{def}}{=}\{X_\mathcal{V} \in \mathcal{X}_\mathcal{V}\}$. Evidence is then defined as the event $\mathcal{E}\stackrel{\text{def}}{=}\mathcal{E}_\mathcal{U}=\{X_\mathcal{U} \in \mathcal{X}_\mathcal{U} \}$. Empty evidence (or no evidence) corresponds to the unconstrained case where $\mathcal{X}_u = \mathcal{D}_u$ for all $u \in \mathcal{U}$. Our aim is to study the conditional distribution
\begin{equation}
\mathbb{P}(X_\mathcal{U} | \mathcal{E})=\frac{\mathbb{P}(X_\mathcal{U} , \mathcal{E})}{\mathbb{P}( \mathcal{E})}.
\end{equation} 

\begin{example}
In the particular case of the precipitation HMM, we denote by $y_{1:n}$ the observed precipitations. We then have for all $i=1\ldots n$: $\mathcal{X}_i=\mathcal{D}_{i}=\{{\tt L},{\tt H}\}$ (no evidence), and $\mathcal{X}_{-i}=\{y_i\}$. We hence have $\mathcal{E}=\{ Y_{1:n}=y_{1:n} \}$ and $\mathbb{P}(X_\mathcal{U} | \mathcal{E})=\mathbb{P}(S_{1:n}| Y_{1:n}=y_{1:n})$.
\end{example}

\begin{example}
For the pedigree BNT, we assume that our disease locus is connected to a recessive disease. If a given individual $i$ is affected by the disease we have $X_i={\tt DD}$, if he is not affected we get $X_i \in \{ {\tt dd},{\tt dD} \}$. Assuming that individuals $8$, $9$ and $10$ are affected, that individual $7$ is not affected, and that we do not know the disease status of the remaining individuals, we get the following evidence: $\mathcal{E}=\{X_{1,3,5,6,9} \in \{{\tt dd},{\tt dD},{\tt DD} \}, X_7 \in \{{\tt dd},{\tt dD}\}, X_{2,4,8,10} = {\tt DD} \}$.

\end{example}

\subsection{Junction Tree}

We consider $C_\mathcal{I}=(C_i)_{i \in \mathcal{I}}$, $\mathcal{I}=\{1,\ldots,q\}$ a set of $q$ \emph{clusters} such as $C_i \subset \mathcal{U}$ for all $i \in \mathcal{I}$ and we assume the following three conditions:
\begin{enumerate}[JT1)]
\item {\bf Tree.} We have a tree structure on $C_\mathcal{I}$: for any $i,j \in \mathcal{I}$ it exists a unique connecting path, denoted $\text{path}(i,j)$, between $C_i$ and $C_j$.
\item {\bf Running intersection.}  For any $i,j \in \mathcal{I}$, $C_i \cap C_j \subset C_k$ for all $k \in \text{path}(i,j)$.
\item {\bf Covering.} For any $u \in \mathcal{U}$, it exists at least one $i \in \mathcal{I}$ such as the \emph{family set} $\text{fa}(u) \stackrel{\text{def}}{=} \text{pa}(u) \cup \{u\}\subset C_i$.
\end{enumerate}
Such a cluster tree is called a \emph{junction tree} (JT) associated to the BNT. Note that the tree composed by a single cluster $C_1=\mathcal{I}$ is always a junction tree, thus proving the existence of such object. However, finding a JT minimizing some criterion (typically the cardinal of the largest cluster) is known to be a NP-hard problem in general \citep{arnborg87}. Fortunately, it exists several heuristics that can build ``reasonable'', but possibly suboptimal, JTs \citep{jensen94,becker96,shoiket97}.

We assign for all $u \in \mathcal{U}$ a cluster $\text{cl}(u) \in \mathcal{I}$, such that $\text{fa}(u) \in \text{cl}(u)$. In the case that there are more than one cluster that fulfill this condition, we arbitrarily select one among them. Note that the condition (JT3) guarantees the existence of at least one possibility.

\begin{figure}[t]
{
\setlength\arraycolsep{1pt}
\begin{center}
\begin{tikzpicture}[>=latex,text height=1.5ex,text depth=0.25ex,scale=1.0]
	\draw (0.5,0) node (1) [var,label=below:$C_1$] {$\begin{array}{c}S_1^* \\ Y_1^*\end{array}$};
	\draw (3,0) node (2) [var,label=below:$C_2$] {$\begin{array}{cc}S_1 & S_2^* \\ \multicolumn{2}{c}{Y_2^*}\end{array}$};
	\draw (6,0) node (3) [var,label=below:$C_3$] {$\begin{array}{cc}S_2 & S_3^* \\ \multicolumn{2}{c}{Y_3^*}\end{array}$};
	\draw (9,0) node (4) [var,label=below:$C_4$] {$\begin{array}{cc}S_3 & S_4^* \\ \multicolumn{2}{c}{Y_4^*}\end{array}$};
	\draw (12,0) node (5) [var,label=below:$C_5$] {$\begin{array}{cc}S_4 & S_5^* \\ \multicolumn{2}{c}{Y_5^*}\end{array}$};;
	\path[-]
        (1) edge[thick] (2)
        (2) edge[thick] (3)
        (3) edge[thick] (4)
        (4) edge[thick] (5)
		;
\end{tikzpicture}
\end{center}
}
\caption{JT for the precipitation HMM with $n=5$. The star $^*$ indicates the cluster to which is associated each variable.}
\label{fig:jt_hmm}
\end{figure}
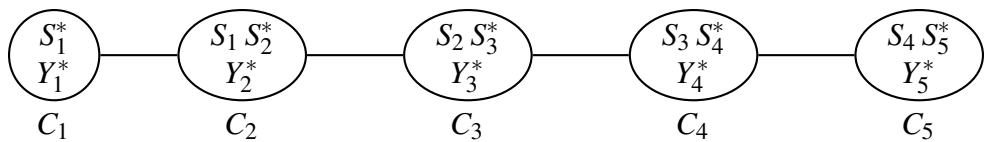

\begin{example}
In the particular case of the precipitation HMM we can build the simple JT  which is a chained sequence of $n$ clusters: $C_1=\{1,-1\}$ and $C_i=\{i-1,i,-i\}$ for $i=2\ldots n$. In order to improve readability from now on we will use the original name of the variables rather than its index $u$ (ex: $S_4$ instead of $4$, $X_2$ instead of $-2$) whenever the notation is not ambiguous. We can therefore write $C_1=\{S_1,Y_1\}$ and $C_i=\{S_{i-1},S_i,Y_i\}$ for $i=2\ldots n$
(see of Fig.~\ref{fig:jt_hmm} for an example with $n=5$). The resulting structure obviously fulfills the three JT conditions. For $i=1\ldots n$, variables $Y_i$ and $S_i$ are assigned to cluster $C_i$. 
\end{example}

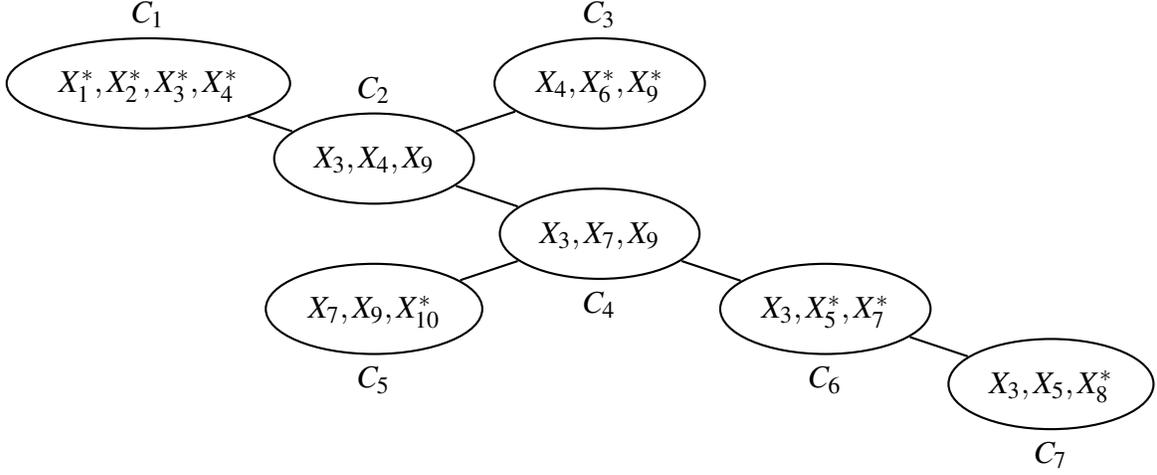
\begin{figure}[t]
\begin{center}
\begin{tikzpicture}[>=latex,text height=1.5ex,text depth=0.25ex,scale=1.0]
	\draw (1,-2) node (C1) [var,label=above:$C_1$] {$X_1^*,X_2^*,X_3^*,X_4^*$};
	\draw (4,-3) node (C2) [var,label=above:$C_2$] {$X_3,X_4,X_9$};
	\draw (7,-2) node (C3) [var,label=above:$C_3$] {$X_4,X_6^*,X_9^*$};
	\draw (7,-4) node (C4) [var,label=below:$C_4$] {$X_3,X_7,X_9$};
	\draw (4,-5) node (C5) [var,label=below:$C_5$] {$X_7,X_9,X_{10}^*$};
	\draw (10,-5) node (C6) [var,label=below:$C_6$] {$X_3,X_5^*,X_7^*$};
	\draw (13,-6) node (C7) [var,label=below:$C_7$] {$X_3,X_5,X_8^*$};
	\path[-]
        (C1) edge[thick] (C2)
        (C2) edge[thick] (C3)
        (C2) edge[thick] (C4)
        (C4) edge[thick] (C5)
        (C4) edge[thick] (C6)
        (C6) edge[thick] (C7)
		;
\end{tikzpicture}
\end{center}
\caption{JT for the pedigree BNT. The star $^*$ indicates the cluster to which is associated each variable.}
\label{fig:jt_toy}
\end{figure}

\begin{example}
We can see in Fig.~\ref{fig:jt_toy}, a JT associated to the pedigree BNT of Fig.~\ref{fig:bnt_toy}. Conditions JT1 (tree) and JT3 (covering) are clearly respected. This is also true of JT2 (running intersection) even if it is less obvious. For an illustrative purpose, let us verify JT2 in two particular cases: 1) $C_1 \cap C_7=\{X_3\}$ which means that $C_2$ and $C_4$ must contain $X_3$; 2) $C_3 \cap C_5=\{X_9\}$ which means that $C_2$ and $C_4$ must also contain $X_9$.
\end{example}

\section{Results}\label{section:results}

\subsection{Messages}


For any edge $i - j$ of the JT, we define the following two sets:
$S_{i,j}=S_{j,i}\stackrel{\text{def}}{=}C_i \cap C_j$ the \emph{separator set}; $U_{i\rightarrow j}\stackrel{\text{def}}{=}\{u \in \mathcal{U}, i \in \text{path}(\text{cl}(u),j)\}$ the \emph{upstream set} ($U_{i\rightarrow j} \cup U_{j\rightarrow i}$ is a partition of $\mathcal{U}$). We then define the \emph{message function} $M_{i\rightarrow j}$ for all $X_{S_{i,j}} \in \mathcal{D}_{S_{i,j}}$ by:
\begin{equation}\label{eq:messagedef0}
	M_{i \rightarrow j}\left(X_{S_{i,j}}\right)\stackrel{\text{def}}{=}
		\mathbf{1}_{\mathcal{E}_{L_{i\rightarrow j}}}%
	\mathbb{P}\left(
	X_{L_{i\rightarrow j}},
	\mathcal{E}_{V_{i\rightarrow j}}
	\left| X_{L_{j\rightarrow i}} \right.
	\right)
\end{equation}
with the convention that $\mathbf{1}_{\mathcal{E}_\emptyset}=1$ and with $L_{i\rightarrow j}\stackrel{\text{def}}{=}U_{i\rightarrow j}\cap S_{i,j}$ ($L_{i\rightarrow j} \cup L_{j\rightarrow i}$ is a partition of $S_{i,j}$), $V_{i\rightarrow j}\stackrel{\text{def}}{=}U_{i\rightarrow j}\setminus S_{i,j}$ ($V_{i\rightarrow j} \cup V_{j\rightarrow i}$ is a partition of $\mathcal{U}\setminus S_{i,j}$).


\begin{example}
In the particular case of the precipitation HMM, for all $i=1\ldots n-1$ we have $S_{i,i+1}=\{S_i\}$, $U_{i\rightarrow i+1}=\{S_{1:i},Y_{1:i}\}$, and $U_{i+1\rightarrow i}=\{S_{i+1:n},Y_{i+1:n}\}$. We hence have
$M_{i \rightarrow i+1}\left(S_i\right)
	=\mathbb{P}(S_i,Y_{1:i}=y_{1:i})$
and
$M_{i+1 \rightarrow i}\left(S_i\right)
	=\mathbb{P}(Y_{1:i}=y_{1:i}|S_i)
$.
We recognize the forward and backward quantities of Eq.~(\ref{eq:hmm_fb}).
\end{example}

\begin{example}
For the pedigree BNT and the JT of  Fig.~\ref{fig:jt_toy} we obtain the following messages:
\begin{itemize}
\item $M_{1 \rightarrow 2}(X_{3:4})=\mathbf{1}_{\mathcal{E}_{3:4}} \mathbb{P}(X_{3:4}, \mathcal{E}_{1:2})$,
$M_{2 \rightarrow 1}(X_{3:4})= \mathbb{P}(\mathcal{E}_{5:10}| X_{3:4})$;
\item $M_{2 \rightarrow 3}(X_{4},X_9)=\mathbf{1}_{\mathcal{E}_{4}}\mathbb{P}(X_4,\mathcal{E}_{1:3,5,7:8,10}| X_{9})$, $M_{3 \rightarrow 2}(X_{4},X_9)=\mathbf{1}_{\mathcal{E}_{9}} \mathbb{P}(X_9 , \mathcal{E}_{6}| X_{4})$;
\item $M_{2 \rightarrow 4}(X_{3},X_9)=\mathbf{1}_{\mathcal{E}_{3,9}} \mathbb{P}(X_{3,9},\mathcal{E}_{1:2,4,6})$,
$M_{4 \rightarrow 2}(X_{3},X_9)=\mathbb{P}(\mathcal{E}_{5,7:8,10}| X_{3,9})$;
\item $M_{4 \rightarrow 5}(X_{7},X_9)=\mathbf{1}_{\mathcal{E}_{7,9}} \mathbb{P}(X_{7,9},\mathcal{E}_{1:6,8})$,
$M_{5 \rightarrow 4}(X_{7},X_9)=\mathbb{P}(\mathcal{E}_{10}| X_{7,9})$;
\item $M_{4 \rightarrow 6}(X_{3},X_7)=\mathbf{1}_{\mathcal{E}_{3}} \mathbb{P}(X_{3},\mathcal{E}_{1:2,4,6,9:10} | X_7)$,
$M_{6 \rightarrow 4}(X_{3},X_7)=\mathbf{1}_{\mathcal{E}_{7}}\mathbb{P}(X_7,\mathcal{E}_{5,8}| X_{3})$;
\item $M_{6 \rightarrow 7}(X_{3},X_5)=\mathbf{1}_{\mathcal{E}_{3,5}} \mathbb{P}(X_{3,5},\mathcal{E}_{1:2,4,6:7,9:10})$,
$M_{7 \rightarrow 6}(X_{3},X_5)=\mathbb{P}(\mathcal{E}_{8}| X_{3,5})$.
\end{itemize}
\end{example}

\begin{lemma}\label{lemma:messages}
For all $u \in \mathcal{U}$, we introduce the \emph{potential} $K_u\left(X_{\text{fa}(u)}\right)\stackrel{\text{def}}{=}\mathbf{1}_{\mathcal{E}_u}\mathbb{P}\left( X_u | X_{\text{pa}(u)}\right)$ and get:
\begin{equation}\label{eq:messagedef}
	M_{i \rightarrow j}\left(X_{S_{i,j}}\right)=
	\sum_{X_{V_{i \rightarrow j}}}	
	\prod_{u \in U_{i\rightarrow j}} K_u\left(X_{\text{fa}(u)}\right).
\end{equation}
\end{lemma}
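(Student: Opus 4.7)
My plan: starting from the product in~(\ref{eq:messagedef}), I factor out the indicators and use the junction tree's structural properties to recognize the resulting sum as the explicit message definition~(\ref{eq:messagedef0}).

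The crucial structural observation is that $\text{fa}(u) \subset U_{i\rightarrow j} \cup L_{j\rightarrow i}$ for every $u \in U_{i\rightarrow j}$. To establish this, fix $u \in U_{i\rightarrow j}$ and take any $v \in \text{fa}(u)$. By the covering axiom (JT3), $v \in C_{\text{cl}(u)}$; trivially $v \in C_{\text{cl}(v)}$ as well. If $\text{cl}(u)$ and $\text{cl}(v)$ lie in the same component of the JT after removing the edge $i - j$ (the ``$i$-side''), then by definition $v \in U_{i\rightarrow j}$. Otherwise the unique JT-path between them must traverse that edge, so by the running intersection axiom (JT2) $v$ belongs to every cluster on the path, including $C_i$ and $C_j$; hence $v \in C_i \cap C_j = S_{i,j}$, and since $\text{cl}(v)$ is on the $j$-side, $v \in U_{j\rightarrow i} \cap S_{i,j} = L_{j\rightarrow i}$.

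Once this inclusion is in hand, the product $\prod_{u \in U_{i\rightarrow j}} K_u(X_{\text{fa}(u)})$ depends only on $X_{U_{i\rightarrow j} \cup L_{j\rightarrow i}}$. Splitting the indicator as $\mathbf{1}_{\mathcal{E}_{U_{i\rightarrow j}}} = \mathbf{1}_{\mathcal{E}_{L_{i\rightarrow j}}} \mathbf{1}_{\mathcal{E}_{V_{i\rightarrow j}}}$, pulling the $L_{i\rightarrow j}$-factor outside the marginalization and absorbing $\mathbf{1}_{\mathcal{E}_{V_{i\rightarrow j}}}$ into the summation range, I obtain
\[
\sum_{X_{V_{i\rightarrow j}}} \prod_{u \in U_{i\rightarrow j}} K_u = \mathbf{1}_{\mathcal{E}_{L_{i\rightarrow j}}} \sum_{X_{V_{i\rightarrow j}} \in \mathcal{X}_{V_{i\rightarrow j}}} \prod_{u \in U_{i\rightarrow j}} \mathbb{P}(X_u \mid X_{\text{pa}(u)}).
\]
The inclusion also tells us that the DAG restricted to $U_{i\rightarrow j}$, with $L_{j\rightarrow i}$ acting as exogenous root variables, is itself a well-formed Bayesian network; hence by the chain rule along any topological order the inner product is $\mathbb{P}(X_{U_{i\rightarrow j}} \mid X_{L_{j\rightarrow i}})$. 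Marginalizing $X_{V_{i\rightarrow j}}$ over $\mathcal{X}_{V_{i\rightarrow j}}$ turns it into $\mathbb{P}(X_{L_{i\rightarrow j}}, \mathcal{E}_{V_{i\rightarrow j}} \mid X_{L_{j\rightarrow i}})$, reproducing $M_{i\rightarrow j}(X_{S_{i,j}})$ from~(\ref{eq:messagedef0}).

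The main obstacle is the structural inclusion $\text{fa}(u) \subset U_{i\rightarrow j} \cup L_{j\rightarrow i}$: this is where the abstract running intersection axiom has to be turned into a concrete statement about parents of $U_{i\rightarrow j}$-nodes near the separator. The rest is essentially bookkeeping, since once we know that the $U_{i\rightarrow j}$-potentials interact with the rest of the network only through $S_{i,j}$, partial marginalization reads off the target conditional directly.
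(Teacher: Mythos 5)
Your argument is essentially the paper's own proof: the structural inclusion $\text{fa}(u)\subset U_{i\rightarrow j}\cup L_{j\rightarrow i}$ that you derive from the covering and running-intersection axioms is precisely the paper's observation that $\text{fa}(U_{i\rightarrow j})\setminus U_{i\rightarrow j}\subset S_{i,j}$ (you simply spell the JT2/JT3 argument out in more detail), and your subsequent identification of $\prod_{u\in U_{i\rightarrow j}}\mathbb{P}(X_u\,|\,X_{\text{pa}(u)})$ with $\mathbb{P}(X_{U_{i\rightarrow j}}\,|\,X_{L_{j\rightarrow i}})$, followed by marginalizing over $X_{V_{i\rightarrow j}}$, is exactly the step the paper dispatches with ``it is first clear that''. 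The only caveat, which applies equally to the paper's version, is that this last identification is not pure chain-rule bookkeeping: collapsing each factor $\mathbb{P}(X_u\,|\,X_{\text{predecessors}},X_{L_{j\rightarrow i}})$ to $\mathbb{P}(X_u\,|\,X_{\text{pa}(u)})$ invokes the directed Markov property and requires the conditioning variables in $L_{j\rightarrow i}$ to be non-descendants of the upstream variables, a point left implicit in both your write-up and the paper's.
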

\begin{proof}
From the definition of the potential $K_u$, it is first clear that
$$
\prod_{u \in U_{i\rightarrow j}} K_u\left(X_{\text{fa}(u)}\right)=
\mathbf{1}_{\mathcal{E}_{U_{i \rightarrow j}}}\mathbb{P}(X_{U_{i \rightarrow j}}| X_{\text{fa}(U_{i\rightarrow j}) \setminus U_{i\rightarrow j}}).
$$
Moreover if $u \in \text{fa}(U_{i\rightarrow j})\setminus U_{i\rightarrow j}$, the covering property ensure that $u$ appears at least  once the upstream side of $i \rightarrow j$. Moreover, since $U_{i\rightarrow j} \cup U_{j\rightarrow i}=\mathcal{U}$ is a partition, $u$ also appears on the downstream side of $i \rightarrow j$. The running intersection property hence proves that $u \in C_i \cap C_j=S_{i,j}$. Since $\text{fa}(U_{i\rightarrow j}) \setminus U_{i\rightarrow j} \subset S_{i,j}$ we therefore can write:
$$
\mathbf{1}_{\mathcal{E}_{U_{i \rightarrow j}}}\mathbb{P}(X_{U_{i \rightarrow j}}| X_{\text{fa}(U_{i\rightarrow j}) \setminus U_{i\rightarrow j}})=
\mathbf{1}_{\mathcal{E}_{L_{i \rightarrow j}}}\mathbb{P}(X_{L_{i \rightarrow j}},
X_{V_{i \rightarrow j}} | X_{L_{j \rightarrow i}})
$$
and the summation over $X_{V_{i \rightarrow j}}$ immediately proves the lemma.
\end{proof}	

Although it is not proved in the same way, one should not that this lemma corresponds exactly to Theorem~10.3 page~354 in \citet{koller09}.

\begin{example}
For the pedigree BNT, we obtain the following potentials:

\vspace{0.5em}

\noindent \hspace{-1.5em} {\setlength{\arraycolsep}{0pt}
\begin{tabular}{ll}
\begin{minipage}{0.46\textwidth}
\begin{itemize}
\item $K_1(X_1)=\mathbf{1}_{\mathcal{E}_1} \mathbb{P}(X_1)$;
\item $K_2(X_2)=\mathbf{1}_{\mathcal{E}_2} \mathbb{P}(X_2)$;
\item $K_3(X_1,X_2,X_3)=\mathbf{1}_{\mathcal{E}_3} \mathbb{P}(X_3|X_1,X_2)$;
\item $K_4(X_1,X_2,X_4)=\mathbf{1}_{\mathcal{E}_4} \mathbb{P}(X_4|X_1,X_2)$;
\item $K_5(X_5)=\mathbf{1}_{\mathcal{E}_5} \mathbb{P}(X_5)$;
\end{itemize}
\end{minipage}
&
\begin{minipage}{0.55\textwidth}
\begin{itemize}
\item $K_6(X_6)=\mathbf{1}_{\mathcal{E}_6} \mathbb{P}(X_6)$;
\item $K_7(X_3,X_5,X_7)=\mathbf{1}_{\mathcal{E}_7} \mathbb{P}(X_7|X_3,X_5)$;
\item $K_8(X_3,X_5,X_8)=\mathbf{1}_{\mathcal{E}_8} \mathbb{P}(X_8|X_3,X_5)$;
\item $K_9(X_4,X_6,X_9)=\mathbf{1}_{\mathcal{E}_9} \mathbb{P}(X_9|X_4,X_6)$;
\item $K_{10}(X_7,X_9,X_{10})=\mathbf{1}_{\mathcal{E}_{10}} \mathbb{P}(X_{10}|X_7,X_9)$.
\end{itemize}
\end{minipage}
\end{tabular}
}
\end{example}

\subsection{Marginal distributions}

\begin{proposition}\label{prop:marginal1}
For any edge $i-j$ of the JT, and for all $X_{S_{i,j}} \in \mathcal{D}_{S_{i,j}}$ we have:
\begin{equation}\label{eq:marginal1}
	\mathbb{P}\left(X_{S_{i,j}},\mathcal{E}\right)=
	M_{i\rightarrow j}\left(X_{S_{i,j}}\right)
	M_{j\rightarrow i}\left(X_{S_{i,j}}\right).
\end{equation}
\end{proposition}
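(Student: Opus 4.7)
The plan is to work directly with the sum-product form of the messages established in Lemma~\ref{lemma:messages}, multiply the two messages together, and then collapse the pair of partial sum-products into a single global sum-product that can immediately be recognized as $\mathbb{P}(X_{S_{i,j}}, \mathcal{E})$.

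First I would substitute Eq.~(\ref{eq:messagedef}) into both factors of the right-hand side:
$$
M_{i\rightarrow j}(X_{S_{i,j}})\, M_{j\rightarrow i}(X_{S_{i,j}})
= \Bigl(\sum_{X_{V_{i\rightarrow j}}} \prod_{u \in U_{i\rightarrow j}} K_u(X_{\text{fa}(u)})\Bigr)
  \Bigl(\sum_{X_{V_{j\rightarrow i}}} \prod_{u \in U_{j\rightarrow i}} K_u(X_{\text{fa}(u)})\Bigr).
$$
The crucial observation is that the two summands depend on disjoint sets of dummy variables. As noted inside the proof of Lemma~\ref{lemma:messages}, $\text{fa}(U_{i\rightarrow j})\setminus U_{i\rightarrow j}\subset S_{i,j}$, so $\prod_{u \in U_{i\rightarrow j}} K_u$ depends only on $X_{U_{i\rightarrow j}\cup S_{i,j}}$, a set that is disjoint from $V_{j\rightarrow i}=U_{j\rightarrow i}\setminus S_{i,j}$ (the symmetric statement holds for the other factor). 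This lets me interchange and merge the two summations.

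Next, using the partition identities $U_{i\rightarrow j}\cup U_{j\rightarrow i}=\mathcal{U}$ and $V_{i\rightarrow j}\cup V_{j\rightarrow i}=\mathcal{U}\setminus S_{i,j}$, the product of sums rewrites as
$$
\sum_{X_{\mathcal{U}\setminus S_{i,j}}} \prod_{u \in \mathcal{U}} K_u(X_{\text{fa}(u)}).
$$
Plugging in the definition $K_u(X_{\text{fa}(u)})=\mathbf{1}_{\mathcal{E}_u}\mathbb{P}(X_u\mid X_{\text{pa}(u)})$ and applying the BNT factorization~(\ref{eq:model}), the integrand collapses to $\mathbf{1}_{\mathcal{E}}\mathbb{P}(X_{\mathcal{U}})$. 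Marginalizing over $X_{\mathcal{U}\setminus S_{i,j}}$ then produces $\mathbb{P}(X_{S_{i,j}},\mathcal{E})$, which is the left-hand side of~(\ref{eq:marginal1}).

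The main obstacle is the bookkeeping of partitions: one must be sure that $V_{i\rightarrow j}$, $V_{j\rightarrow i}$, and $S_{i,j}$ genuinely partition $\mathcal{U}$, and that the two partial products depend on disjoint sets of summation variables so that the product of sums equals the sum of products. Both facts follow from the running intersection and covering properties already exploited in Lemma~\ref{lemma:messages}; once they are cleanly assembled, the rest is elementary algebra.
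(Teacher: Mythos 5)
Your proposal is correct and is essentially the paper's own argument run in the opposite direction: the paper starts from $\mathbb{P}(X_{S_{i,j}},\mathcal{E})=\sum_{X_{\mathcal{U}\setminus S_{i,j}}}\prod_{u\in\mathcal{U}}K_u(X_{\text{fa}(u)})$ and factors it into the two messages using the same partition facts and the sum-product form of Lemma~\ref{lemma:messages}, whereas you multiply the two messages and merge the sums. The only (harmless) difference is that you additionally derive the starting identity from the BNT factorization, which the paper simply asserts.
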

\begin{proof}
Starting from
$$
\mathbb{P}\left(X_{S_{i,j}},\mathcal{E}\right)=
\sum_{X_{\mathcal{U}\setminus S_{i,j}}}
\prod_{u \in \mathcal{U}} K_u\left(X_{\text{fa}(u)}\right)
$$
with use the fact that $V_{i\rightarrow j} \cup V_{j\rightarrow i}$ is a partition of $\mathcal{U}\setminus S_{i,j}$ and that  $U_{i\rightarrow j} \cup U_{j\rightarrow i}$ is a partition of $\mathcal{U}$ to write:
$$
\mathbb{P}\left(X_{S_{i,j}},\mathcal{E}\right)=
\sum_{X_{V_{i \rightarrow j}}}\sum_{X_{V_{j \rightarrow i}}}
\prod_{u \in U_{i \rightarrow j}} K_u\left(X_{\text{fa}(u)}\right)
\prod_{u \in U_{j \rightarrow i}} K_u\left(X_{\text{fa}(u)}\right)
$$
and since for all $u \in U_{i \rightarrow j}$ it is clear that $K_u\left(X_{\text{fa}(u)}\right)$ does not depend on $X_{V_{j \rightarrow i}}$ we finally obtain:
$$
\mathbb{P}\left(X_{S_{i,j}},\mathcal{E}\right)=
\underbrace{\sum_{X_{V_{i \rightarrow j}}}
\prod_{u \in U_{i \rightarrow j}} K_u\left(X_{\text{fa}(u)}\right)}_{M_{i\rightarrow j}\left(X_{S_{i,j}}\right)}
\underbrace{\sum_{X_{V_{j \rightarrow i}}}
\prod_{u \in U_{j \rightarrow i}} K_u\left(X_{\text{fa}(u)}\right)}_{M_{j\rightarrow i}\left(X_{S_{i,j}}\right)}
$$
which achieves the proof.
\end{proof}

\begin{example}
For the precipitation HMM, Proposition~\ref{prop:marginal1} gives for all $i=1\ldots n-1$:
$$
\mathbb{P}(S_i,Y_{1:n}=y_{1:n})=M_{i\rightarrow i+1}(S_i)M_{i+1\rightarrow i}(S_i)=F_i(S_i)B_i(S_i)
$$
which is exactly Eq~(\ref{eq:hmm_marginal1}).
\end{example}

\begin{proposition}\label{prop:marginal2}
For any $j \in \mathcal{J}$ and for all $X_{C_j} \in \mathcal{D}_{C_j}$ we have:
\begin{equation}\label{eq:marginal2}
	\mathbb{P}\left(X_{C_j},\mathcal{E}\right)=
	\Phi_j\left(X_{C_j}\right)
	\prod_{i \in \text{n}(j)} M_{i\rightarrow j}\left(X_{S_{i,j}}\right)
\end{equation}
where $\text{n}(j)\stackrel{\text{def}}{=}\{i, \text{$i-j$ is an edge of the JT}\}$ denotes the \emph{neighbor set} of $j$, and where
$\Phi_j\left(X_{C_j}\right)\stackrel{\text{def}}{=} \prod_{u \in C_j^* } K_u\left(X_{\text{fa}(u)}\right)$,  with $C_j^*\stackrel{\text{def}}{=}\{u \in C_j, \text{cl}(u)=j\}$, is the potential of $C_j$.
\end{proposition}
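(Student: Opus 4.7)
The plan is to mimic the proof of Proposition~\ref{prop:marginal1}, but with the JT-node $j$ replacing the JT-edge $i-j$ as the ``separator'' point of the decomposition. The starting point is the global factorization
$$
\mathbb{P}(X_{C_j},\mathcal{E})=\sum_{X_{\mathcal{U}\setminus C_j}}\prod_{u\in\mathcal{U}} K_u\bigl(X_{\text{fa}(u)}\bigr),
$$
so the whole job is to split both the index set of the product and the index set of the summation according to the neighbours of $j$.

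First, I would establish the partition of $\mathcal{U}$. Writing $\text{n}(j)=\{i_1,\ldots,i_k\}$, removing $j$ from the JT produces $k$ subtrees, and for every $u\in\mathcal{U}$ the path from $\text{cl}(u)$ to $j$ either is trivial (iff $\text{cl}(u)=j$, i.e.\ $u\in C_j^*$) or first visits a unique neighbour $i\in\text{n}(j)$ (iff $u\in U_{i\rightarrow j}$). Hence $\mathcal{U}=C_j^*\sqcup\bigsqcup_{i\in\text{n}(j)}U_{i\rightarrow j}$. Combining this with the identities $U_{i\rightarrow j}\cap C_j=L_{i\rightarrow j}$ (which follows from running intersection: if $u\in U_{i\rightarrow j}\cap C_j$ then $u\in C_{\text{cl}(u)}\cap C_j$, and $i$ is on the connecting path so $u\in C_i$, i.e.\ $u\in S_{i,j}$) and $C_j^*\subset C_j$, I get the companion partition $\mathcal{U}\setminus C_j=\bigsqcup_{i\in\text{n}(j)}V_{i\rightarrow j}$.

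Next I would argue that the product over $\mathcal{U}$ splits cleanly across these blocks. The factor $\prod_{u\in C_j^*}K_u=\Phi_j(X_{C_j})$ depends only on $X_{C_j}$ and is thus pulled out of every summation. For the remaining factors, the key point is that for $u\in U_{i\rightarrow j}$ the potential $K_u(X_{\text{fa}(u)})$ does not depend on any $X_{V_{i'\rightarrow j}}$ with $i'\neq i$. Indeed, if $v\in\text{fa}(u)\subset C_{\text{cl}(u)}$ happened to lie in some $V_{i'\rightarrow j}$, then $\text{cl}(v)$ would sit behind $i'$ while $\text{cl}(u)$ sits behind $i$, so the connecting path in the JT would pass through $i'$, $j$ and $i$; running intersection then forces $v\in C_{i'}\cap C_j=S_{i',j}$, contradicting $v\in V_{i'\rightarrow j}=U_{i'\rightarrow j}\setminus S_{i',j}$. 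This independence statement is the step I expect to require the most care.

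Once this is in hand, the summation factorizes:
$$
\mathbb{P}(X_{C_j},\mathcal{E})=\Phi_j(X_{C_j})\prod_{i\in\text{n}(j)}\Biggl(\sum_{X_{V_{i\rightarrow j}}}\prod_{u\in U_{i\rightarrow j}}K_u\bigl(X_{\text{fa}(u)}\bigr)\Biggr),
$$
and Lemma~\ref{lemma:messages} identifies the bracketed sum as $M_{i\rightarrow j}(X_{S_{i,j}})$, yielding Eq.~(\ref{eq:marginal2}). The structure of the argument is exactly the one used in Proposition~\ref{prop:marginal1}, only now the separator $S_{i,j}$ is replaced by the whole cluster $C_j$ and an extra local factor $\Phi_j$ appears from the variables assigned to $j$ itself.
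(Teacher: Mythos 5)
Your proof is correct and follows exactly the route the paper intends: the paper's own proof is just a two-line sketch citing the two partitions $\mathcal{U}=C_j^{*}\cup\bigcup_{i\in\text{n}(j)}U_{i\rightarrow j}$ and $\mathcal{U}\setminus C_j=\bigcup_{i\in\text{n}(j)}V_{i\rightarrow j}$ and saying ``very similar to Proposition~\ref{prop:marginal1}''. You have simply filled in the details (the running-intersection arguments for $U_{i\rightarrow j}\cap C_j=L_{i\rightarrow j}$ and for the independence of each block from the other $X_{V_{i'\rightarrow j}}$, plus the identification of each factor via Lemma~\ref{lemma:messages}), which is precisely what the paper leaves implicit.
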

\begin{proof}
The proof is very similar to the one of Proposition~\ref{prop:marginal1}. The key is here to realize that: 1) $\cup_{i\in\text{n}(j)} V_{i \rightarrow j}$ is a partition of $\mathcal{U} \setminus C_j$; 2)
$C_j^* \cup_{i\in\text{n}(j)} U_{i \rightarrow j}$ is a partition of $\mathcal{U}$.
\end{proof}


\begin{example}
In the particular case of the precipitation HMM, for $i=2\ldots n-1$ we have
$C_{i}=\{S_{i-1},S_i,Y_i\}$, $C_i^*=\{S_{i},Y_i\}$, $\text{n}(i)=\{i-1,i+1\}$, and hence $\mathbb{P}(S_{i-1},S_{i},Y_{1:n}=y_{1:n})=\mathbb{P}(S_i|S_{i-1})\mathbb{P}(Y_i=y_i|S_i)M_{i-1\rightarrow i}(S_{i-1})M_{i+1\rightarrow i}(S_{i})$, which corresponds to Eq~(\ref{eq:hmm_marginal2}).
\end{example}


\begin{example}
For the pedigree BNT, the marginal distributions of all clusters are the following:
\begin{itemize}
\item $\mathbb{P}(X_1,X_2,X_3,X_4,\mathcal{E})=K_1(X_1)K_2(X_2)K_3(X_1,X_2,X_3)K_4(X_1,X_2,X_4)M_{2\rightarrow 1}(X_3,X_4)$;
\item $\mathbb{P}(X_3,X_4,X_9,\mathcal{E})=M_{1\rightarrow 2}(X_3,X_4)M_{3\rightarrow 2}(X_4,X_9)M_{4\rightarrow 2}(X_3,X_9)$;
\item $\mathbb{P}(X_4,X_6,X_9,\mathcal{E})=K_6(X_6)K_9(X_4,X_6,X_9)M_{2\rightarrow 3}(X_4,X_9)$;
\item $\mathbb{P}(X_3,X_7,X_9,\mathcal{E})=M_{2\rightarrow 4}(X_3,X_9)M_{5\rightarrow 4}(X_7,X_9)M_{6\rightarrow 4}(X_3,X_7)$;
\item $\mathbb{P}(X_7,X_9,X_{10},\mathcal{E})=K_{10}(X_7,X_9,X_{10})M_{4\rightarrow 5}(X_7,X_9)$;
\item $\mathbb{P}(X_3,X_5,X_7,\mathcal{E})=K_5(X_5)K_7(X_3,X_5,X_7)M_{4\rightarrow 6}(X_3,X_7)M_{7\rightarrow 6}(X_3,X_5)$;
\item $\mathbb{P}(X_3,X_5,X_8,\mathcal{E})=K_8(X_3,X_5,X_8)M_{6\rightarrow 7}(X_3,X_5)$.
\end{itemize}

Using the messages computed in Table~\ref{table:ped_messages} (see next section for more details on this computation), we get:
\begin{itemize}
\item $\mathbb{P}(\mathcal{E})=\sum_{X_3} M_{1\rightarrow 2}(X_3,{\tt DD})M_{2\rightarrow 1}(X_3,{\tt DD})
=0.0000480+0.0001152=0.0001632$;
\item $\mathbb{P}(X_1={\tt dD} | \mathcal{E})=0.7647$, and $\mathbb{P}(X_1={\tt DD} | \mathcal{E})=0.2353$;
\item $\mathbb{P}(X_2={\tt DD} | \mathcal{E})=1.0000$;
\item $\mathbb{P}(X_3={\tt dD} | \mathcal{E})=0.2941$, and $\mathbb{P}(X_3={\tt DD} | \mathcal{E})=0.7059$;
\item $\mathbb{P}(X_4={\tt DD} | \mathcal{E})=1.0000$;
\item $\mathbb{P}(X_5={\tt dD} | \mathcal{E})=0.9412$, and $\mathbb{P}(X_5={\tt DD} | \mathcal{E})=0.0588$;
\item $\mathbb{P}(X_6={\tt dd} | \mathcal{E})=0.5333$, $\mathbb{P}(X_6={\tt dD} | \mathcal{E})=0.4000$, and $\mathbb{P}(X_6={\tt DD} | \mathcal{E})=0.0667$;
\item $\mathbb{P}(X_7={\tt dD} | \mathcal{E})=1.0000$;
\item $\mathbb{P}(X_8={\tt DD} | \mathcal{E})=1.0000$;
\item $\mathbb{P}(X_1={\tt dD} | \mathcal{E})=0.6778$, and $\mathbb{P}(X_1={\tt DD} | \mathcal{E})=0.3333$;
\item $\mathbb{P}(X_{10}={\tt DD} | \mathcal{E})=1.0000$.
\end{itemize}
One should note that these marginal distributions only describe roughly the distribution $\mathbb{P}(X_\mathcal{U}|\mathcal{E})$. For example, if we consider the joint distribution of $(X_3,X_5)$ (obtained by the product of messages $M_{6\rightarrow 7}$ and $M_{7\rightarrow 6}$) we get: $\mathbb{P}(X_3={\tt dD},X_{5}={\tt dD} | \mathcal{E})=0.2353$, $\mathbb{P}(X_3={\tt dD},X_{5}={\tt DD} | \mathcal{E})=0.0588$, $\mathbb{P}(X_3={\tt DD},X_{5}={\tt dD} | \mathcal{E})=0.7059$, and $\mathbb{P}(X_3={\tt DD},X_{5}={\tt DD} | \mathcal{E})=0.000$ while (for example) $\mathbb{P}(X_3={\tt DD} | \mathcal{E})\times \mathbb{P}(X_5={\tt DD} | \mathcal{E})=0.0415 \neq 0.000$.

\end{example}

\subsection{Recursions}

\begin{corollary}
For all $j-k$ edge of the JT, for all $X_{S_{j,k}} \in \mathcal{D}_{S_{j,k}}$ we have:
\begin{equation}\label{eq:recursion}
M_{j \rightarrow k}\left(X_{S_{j,k}}\right)=
\sum_{X_{C_j\setminus S_{j,k}}}
\Phi_j\left( X_{C_j} \right)
\prod_{i \in \text{n}(j),i \neq k} M_{i\rightarrow j}\left(X_{S_{i,j}}\right).
\end{equation}
\end{corollary}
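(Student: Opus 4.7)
The plan is to reduce the recursion to the factorizations already proved. Proposition~\ref{prop:marginal2} says $\mathbb{P}(X_{C_j},\mathcal{E}) = \Phi_j(X_{C_j}) \prod_{i \in \text{n}(j)} M_{i \rightarrow j}(X_{S_{i,j}})$, and Proposition~\ref{prop:marginal1} applied to the edge $j-k$ gives $\mathbb{P}(X_{S_{j,k}},\mathcal{E}) = M_{j \rightarrow k}(X_{S_{j,k}}) M_{k \rightarrow j}(X_{S_{j,k}})$. My first step would be to marginalize the former over $X_{C_j \setminus S_{j,k}}$. Since $M_{k \rightarrow j}(X_{S_{j,k}})$ depends only on variables not being summed out, it factors out of the sum, yielding
$$\mathbb{P}(X_{S_{j,k}},\mathcal{E}) = M_{k \rightarrow j}(X_{S_{j,k}}) \sum_{X_{C_j \setminus S_{j,k}}} \Phi_j(X_{C_j}) \prod_{i \in \text{n}(j),\, i \neq k} M_{i \rightarrow j}(X_{S_{i,j}}).$$
Comparing with Proposition~\ref{prop:marginal1} and cancelling $M_{k \rightarrow j}$ on both sides would give Eq.~\eqref{eq:recursion}.

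The obstacle is that this cancellation is illegitimate wherever $M_{k \rightarrow j}(X_{S_{j,k}})$ vanishes, which can happen as soon as the evidence rules out some configuration of $X_{S_{j,k}}$. To be safe, I would instead argue directly from Lemma~\ref{lemma:messages}, which expresses $M_{j \rightarrow k}$ as $\sum_{X_{V_{j \rightarrow k}}} \prod_{u \in U_{j \rightarrow k}} K_u(X_{\text{fa}(u)})$. Removing the edge $j-k$ splits the JT into two subtrees; the side containing $j$ decomposes around $j$ and its other neighbors, giving the disjoint-union partition
$$U_{j \rightarrow k} = C_j^* \;\sqcup\; \bigsqcup_{i \in \text{n}(j),\, i \neq k} U_{i \rightarrow j},$$
and (via a running-intersection argument) the finer partition $V_{j \rightarrow k} = (C_j \setminus S_{j,k}) \sqcup \bigsqcup_{i \neq k} V_{i \rightarrow j}$. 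The product then splits as $\Phi_j(X_{C_j}) \prod_{i \neq k} \prod_{u \in U_{i \rightarrow j}} K_u$. As in the proof of Lemma~\ref{lemma:messages}, each inner product depends only on variables indexed in $V_{i \rightarrow j} \cup S_{i,j} \subset V_{i \rightarrow j} \cup C_j$, so the summations over the pairwise-disjoint $V_{i \rightarrow j}$ can be pushed inside the outer product, and each inner sum is recognized as $M_{i \rightarrow j}(X_{S_{i,j}})$ by a second application of Lemma~\ref{lemma:messages}.

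The delicate point I expect to be the main obstacle is the partition identity $V_{j \rightarrow k} = (C_j \setminus S_{j,k}) \sqcup \bigsqcup_{i \neq k} V_{i \rightarrow j}$: one must use running intersection to rule out that an element of $V_{i \rightarrow j}$ secretly lies in $S_{j,k}$, and conversely that an element of $C_j \setminus S_{j,k}$ has its cluster assignment on the $k$-side of the edge. Once these set-theoretic identities are in hand, the rest is bookkeeping and the identification of inner sums as messages.
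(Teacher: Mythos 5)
Your first sketch is exactly the paper's proof: the paper starts from $\mathbb{P}(X_{S_{j,k}},\mathcal{E})=\sum_{X_{C_j\setminus S_{j,k}}}\mathbb{P}(X_{C_j},\mathcal{E})$, applies Eq.~(\ref{eq:marginal1}) on the left and Eq.~(\ref{eq:marginal2}) on the right, and (implicitly) cancels $M_{k\rightarrow j}(X_{S_{j,k}})$, in the same spirit as the HMM forward-recursion proof that ``simplifies by $B_i(s)$''. Your objection to that cancellation is legitimate: under evidence the messages do vanish on some configurations (see the many zeros in Table~\ref{table:ped_messages}), so the divide-out argument only pins down $M_{j\rightarrow k}$ where $M_{k\rightarrow j}\neq 0$, and the paper leaves this point unaddressed. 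Your second, direct argument is a genuinely different route and is correct: it bypasses Propositions~\ref{prop:marginal1} and~\ref{prop:marginal2} entirely and verifies Eq.~(\ref{eq:recursion}) straight from Lemma~\ref{lemma:messages}, using the partitions $U_{j\rightarrow k}=C_j^*\sqcup\bigsqcup_{i\in\text{n}(j),i\neq k}U_{i\rightarrow j}$ and $V_{j\rightarrow k}=(C_j\setminus S_{j,k})\sqcup\bigsqcup_{i\neq k}V_{i\rightarrow j}$, both of which hold by the running-intersection and covering properties exactly as you indicate (an element of $C_j\setminus S_{j,k}$ with cluster assignment on the $k$-side would be forced into $C_k$, hence into $S_{j,k}$; an element of $V_{i\rightarrow j}$ lying in $C_j$ would be forced into $S_{i,j}$), and the locality fact $\text{fa}(U_{i\rightarrow j})\setminus U_{i\rightarrow j}\subset S_{i,j}$ from the proof of Lemma~\ref{lemma:messages} lets you push each sum over $X_{V_{i\rightarrow j}}$ inside and recognize it as $M_{i\rightarrow j}(X_{S_{i,j}})$. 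What each approach buys: the paper's proof is two lines and reuses the marginal factorizations, at the price of the zero-cancellation caveat; yours costs more set-theoretic bookkeeping but establishes the recursion unconditionally and stays closer in style to the proofs of Lemma~\ref{lemma:messages} and Proposition~\ref{prop:marginal1}, where sums are split along disjoint upstream sets.
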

\begin{proof}
Start with
$$\mathbb{P}\left(X_{S_{j,k}},\mathcal{E}\right)=\sum_{X_{C_j\setminus S_{j,k}}}
\mathbb{P}\left(X_{C_j},\mathcal{E}\right)$$ and apply 
Eq.~(\ref{eq:marginal1}) to the left-hand and Eq.~(\ref{eq:marginal2}) to the right-hand.
\end{proof}

\begin{example}
In the particular case of the precipitation HMM, we get:
\begin{itemize}
\item for all $i=2\ldots n-1$, $M_{i \rightarrow i+1}\left(S_i\right)=
\sum_{S_{i-1}}\mathbb{P}(S_{i}|S_{i-1})\mathbb{P}(Y_i=y_i|S_i)
 M_{i-1\rightarrow i}\left(S_{i-1}\right)$ which is exactly the forward recursion of Eq.~(\ref{eq:hmm_forward_recursion});
\item for all $i=1\ldots n-2$, $M_{i+1 \rightarrow i}\left(S_i\right)=
\sum_{S_{i+1}}\mathbb{P}(S_{i+1}|S_{i})\mathbb{P}(Y_{i+1}=y_{i+1}|S_{i+1})
 M_{i\rightarrow i-1}\left(S_{i}\right)$ which is exactly the forward recursion of Eq.~(\ref{eq:hmm_backward_recursion}).
\end{itemize}
Since in that case the JT is in fact reduced to a simple sequence, messages in the forward and backward directions can be computed independently. This is however not true in the general case where a more subtle recursion algorithm is needed.
\end{example}

\begin{proposition}[inward-outward algorithm]
If we choose a root $r \in \mathcal{I}$ for the JT, we call \emph{inward message} any message orientated from leaves to the root, and \emph{outward message} any message in the opposite direction. We define on $i\in \mathcal{I}$ two recursive function:
\begin{itemize}
\item ${\tt inward}(i)$: {\bf for all} $j$ offspring of $i$  {\bf do} call ${\tt inward}(j)$, and compute $M_{j\rightarrow i}$;
\item ${\tt outward}(i)$: {\bf for all} $j$ offspring of $i$  {\bf do} compute $M_{i\rightarrow j}$, and call ${\tt outward}(j)$.
\end{itemize}
Then all inward messages can be computed by calling ${\tt inward}(r)$, and then, the remaining outward messages by calling ${\tt outward}(r)$.
\end{proposition}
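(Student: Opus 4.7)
The plan is to establish correctness by structural induction on the JT rooted at $r$, using the recursion of Eq.~(\ref{eq:recursion}) as the means to compute each message from its incoming messages and the local potential $\Phi_j$. The algorithm's correctness reduces to a scheduling question: whenever the procedure reaches a step ``compute $M_{j \rightarrow k}$'', every $M_{i \rightarrow j}$ with $i \in \text{n}(j) \setminus \{k\}$ must already be available. In a rooted tree, the neighbors of any node $j$ split into its unique parent (on the $r$-side) and its offspring, and this partition is the structural fact we exploit throughout.

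For the inward phase, I would prove by induction on the subtree rooted at $i$ that once ${\tt inward}(i)$ returns, every inward message on an edge strictly within that subtree has been computed. The base case is $i$ a leaf: the loop is empty and there is nothing to do. For the inductive step, for each offspring $j$ of $i$, the recursive call ${\tt inward}(j)$ yields by hypothesis all inward messages strictly within $j$'s subtree, and in particular every $M_{k \rightarrow j}$ for $k$ an offspring of $j$. Since the offspring of $j$ are exactly $\text{n}(j) \setminus \{i\}$, the recursion applied at $j$ is fully evaluable and produces $M_{j \rightarrow i}$, as required. Taking $i = r$ shows that all inward messages are computed.

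For the outward phase, I would again induct on the subtree rooted at $i$, with the invariant: when ${\tt outward}(i)$ is called, the message $M_{\text{pa}(i) \rightarrow i}$ has already been computed if $i \neq r$, and every inward message $M_{k \rightarrow i}$ for $k$ an offspring of $i$ was produced in the first phase. Hence, for each offspring $j$ of $i$, the quantities $M_{k \rightarrow i}$ with $k \in \text{n}(i) \setminus \{j\}$ (one parent-side message plus the inward messages from the other offspring of $i$) are all available, so Eq.~(\ref{eq:recursion}) yields $M_{i \rightarrow j}$. Just before the recursive call ${\tt outward}(j)$, the outward message into $j$ has just been produced, and $j$'s offspring delivered their inward messages in the first phase, so the invariant is restored. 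The base case $i = r$ is immediate: $r$ has no parent, and the first phase has already computed $M_{k \rightarrow r}$ for every $k \in \text{n}(r)$.

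The main obstacle is not deep mathematics but rather the careful verification that at each step the set $\text{n}(j) \setminus \{k\}$ of required incoming messages coincides exactly with the set of messages already produced. The tree structure makes this work automatically: each JT edge carries exactly one inward message (supplied by the first phase) and one outward message (supplied by the second), and the post-order traversal realized by ${\tt inward}$ dovetails with the pre-order traversal realized by ${\tt outward}$, so dependencies never conflict and every message in the JT is eventually computed.
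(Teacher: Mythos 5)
Your proof is correct. Note, however, that the paper does not actually prove this proposition: it simply points to the classical textbooks, so there is no ``paper proof'' to match, and your argument is a genuine addition rather than a rediscovery. What you supply is exactly the missing ingredient in the paper's message-centric framework: since the messages are defined explicitly by Eq.~(\ref{eq:messagedef0}) and the identity Eq.~(\ref{eq:recursion}) has already been established for them (via Lemma~\ref{lemma:messages} and Propositions~\ref{prop:marginal1}--\ref{prop:marginal2}), the only thing left to verify is a scheduling claim --- that whenever the algorithm evaluates the right-hand side of Eq.~(\ref{eq:recursion}) for $M_{j\rightarrow k}$, all messages $M_{i\rightarrow j}$ with $i\in\text{n}(j)\setminus\{k\}$ are already available --- and your double structural induction (post-order for \texttt{inward}, pre-order with the parent-message invariant for \texttt{outward}) settles precisely that, using the rooted-tree fact that $\text{n}(j)$ splits into the unique parent and the offspring. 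This is lighter than the textbook route, where messages are \emph{defined} by the recursion and correctness instead requires proving that the quantities produced by the two sweeps have the desired marginal semantics; in the present formalism that semantic content is carried by the explicit definition, so your purely combinatorial argument suffices and is arguably the proof the paper should have included. One cosmetic suggestion: state explicitly at the outset that you take Eq.~(\ref{eq:recursion}) as an identity already proved for the explicitly defined messages, so that the reader sees clearly that your induction concerns only the availability of operands, not their values.
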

\begin{proof}
See classical textbooks \citep{cowell99,jensen07,koller09} for a detailed proof.
\end{proof}

One should note that if the inward recursion only involve inward messages, the outward recursion involves both inward and outward messages. This means that unlike with the forward-backward recursion in HMM, the two recursions cannot be done independently. Another interesting remark is that thanks to Eq.~(\ref{eq:marginal2}),  the recursion ${\tt inward}(r)$ is sufficient to obtain $\mathbb{P}(X_{C_{r}},\mathcal{E})$ and hence also $\mathbb{P}(\mathcal{E})$.

\begin{example}
If we now come back to the precipitation HMM and if we root the JT in $r=n$, then ${\tt inward}(n)$ perform the standard forward recursion, and ${\tt outward}(n)$ perform the backward one. However, other rooting are possible. For example if we choose $r=i\in\mathcal{I}$ with $i\neq 1$ and $i\neq n$, then ${\tt inward}(i)$ allows to compute $\mathbb{P}(S_{i-1},S_{i},Y_{1:n}=y_{1:n})$ directly, the inward messages involved in the process being a mixture of forward and backward messages.
\end{example}

\begin{example}
For the pedigree BNT with root $r=1$, the inward recursion is:
\begin{itemize}
\item $M_{7\rightarrow 6}(X_3,X_5)=\sum_{X_8} K_8(X_3,X_5,X_8)$;
\item $M_{6\rightarrow 4}(X_3,X_7)=\sum_{X_5} K_5(X_5)K_7(X_3,X_5,X_7)M_{7\rightarrow 6}(X_3,X_5)$;
\item $M_{5\rightarrow 4}(X_7,X_9)=\sum_{X_{10}} K_{10}(X_7,X_9,X_{10})$;
\item $M_{4\rightarrow 2}(X_3,X_9)=\sum_{X_7} M_{5\rightarrow 4}(X_7,X_9)M_{6\rightarrow 4}(X_3,X_7)$;
\item $M_{3\rightarrow 2}(X_4,X_9)=\sum_{X_6} K_6(X_6)K_9(X_4,X_6,X_9)$;
\item $M_{2\rightarrow 1}(X_3,X_4)=\sum_{X_9} M_{3\rightarrow 2}(X_4,X_9)M_{4\rightarrow 2}(X_3,X_9)$.
\end{itemize}
The outward recursion is (inward messages are underlined):
\begin{itemize}
\item $M_{1\rightarrow 2}(X_3,X_4)=\sum_{X_1,X_2}  K_1(X_1)K_2(X_2)K_3(X_1,X_2,X_3)K_4(X_1,X_2,X_4)$;
\item $M_{2\rightarrow 3}(X_4,X_9)=\sum_{X_3} M_{1\rightarrow 2}(X_3,X_4)\underline{M_{4\rightarrow 2}(X_3,X_9)}$;
\item $M_{2\rightarrow 4}(X_3,X_9)=\sum_{X_4} M_{1\rightarrow 2}(X_3,X_4)\underline{M_{3\rightarrow 2}(X_4,X_9)}$;
\item $M_{4\rightarrow 5}(X_7,X_9)=\sum_{X_{3}} M_{2\rightarrow 4}(X_3,X_9) \underline{M_{6\rightarrow 4}(X_3,X_7)}$;
\item $M_{4\rightarrow 6}(X_3,X_7)=\sum_{X_9} M_{2\rightarrow 4}(X_3,X_9) \underline{M_{5\rightarrow 4}(X_7,X_9)}$;
\item $M_{6\rightarrow 7}(X_3,X_5)=\sum_{X_7} K_5(X_5)K_7(X_3,X_5,X_7)M_{4\rightarrow 6}(X_3,X_7)$.
\end{itemize}

The results of these recursions are given in Table~\ref{table:ped_messages}.
\end{example}

\begin{table}
\caption{Messages of the pedigree BNT. First part of the table corresponds to the inward messages computed using Cluster~$1$ as root. The second part of the table corresponds to the outward messages.}\label{table:ped_messages}
\vspace{-2em}
$$
\small
{\setlength{\arraycolsep}{4pt}
\begin{array}{cccccccccc}
\hline
X_i,X_j & {\tt dd},{\tt dd} & {\tt dd},{\tt dD} & {\tt dd},{\tt DD} & {\tt dD},{\tt dd} & {\tt dD},{\tt dD} &  {\tt dD},{\tt DD} & {\tt DD},{\tt dd} & {\tt DD},{\tt dD} & {\tt DD},{\tt DD} \\
\hline
M_{7\rightarrow 6}(X_3,X_5) &  0.0000 & 0.0000  &  0.0000 & 0.0000 & 0.2500 & 0.5000 & 0.0000 & 0.5000 & 1.0000\\
M_{6\rightarrow 4}(X_3,X_7) &  0.0000 & 0.0000  &  0.0000 & 0.0200 & 0.0500 & 0.0000 & 0.0000 & 0.0800 & 0.0000\\
M_{5\rightarrow 4}(X_7,X_9) &  0.0000 & 0.0000  &  0.0000 & 0.0000 & 0.2500 & 0.5000 & 0.0000 & 0.5000 & 1.0000\\
M_{4\rightarrow 2}(X_3,X_9) &  0.0000 & 0.0000  &  0.0000 & 0.0000 & 0.0250 & 0.0500 & 0.0000 & 0.0400 & 0.0800\\
M_{3\rightarrow 2}(X_4,X_9) &  0.8000 & 0.2000  &  0.0000 & 0.4000 & 0.5000 & 0.1000 & 0.0000 & 0.8000 & 0.2000\\
M_{2\rightarrow 1}(X_3,X_4) &  0.0000 & 0.0000  &  0.0000 & 0.0025 & 0.0088 & 0.0150 & 0.0040 & 0.0140 & 0.0240\\
\hline
1000 \times M_{1\rightarrow 2}(X_3,X_4)  &  0.0000 &0.0000 &0.0000 &0.0000 &0.0000 &3.2000 &0.0000 &0.0000 &4.8000\\
1000 \times M_{2\rightarrow 3}(X_4,X_9)  &  0.0000 &0.0000 &0.0000 &0.0000 &0.0000 &0.0000 &0.0000 &0.1360 &0.2720\\
1000 \times M_{2\rightarrow 4}(X_3,X_9)  &  0.0000 &0.0000 &0.0000 &0.0000 &2.5600 &0.6400 &0.0000 &3.8400 &0.9600\\
1000 \times M_{4\rightarrow 5}(X_7,X_9)  &  0.0000 &0.0512 &0.0128 &0.0000 &0.4352 &0.1088 &0.0000 &0.0000 &0.0000\\
1000 \times M_{4\rightarrow 6}(X_3,X_7)  &  0.0000 &0.0000 &0.0000 &0.0000 &0.9600 &1.9200 &0.0000 &1.4400 &2.8800\\
1000 \times M_{6\rightarrow 7}(X_3,X_5)  &  0.0000 &0.0000 &0.0000 &0.3072 &0.1536 &0.0192 &0.9216 &0.2304 &0.0000\\
\hline
\end{array}
}
$$

\end{table}

\subsection{Sampling}

\begin{corollary}
For all edge $j-k$ of the JT, for all $X_{C_{j}} \in \mathcal{D}_{C_{j}}$ we have:
	\begin{equation}
	\mathbb{P}\left(X_{C_j} | X_{S_{j,k}},\mathcal{E}\right)
	=\frac{\Phi_j\left( X_{C_j} \right)
\prod_{i \in \text{n}(j),i \neq k} M_{i\rightarrow j}\left(X_{S_{i,j}}\right)}
	{M_{j\rightarrow k}\left(X_{S_{j,k}} \right)}
	\end{equation}
\end{corollary}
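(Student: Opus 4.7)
The plan is to reduce this identity to the elementary conditional probability formula $\mathbb{P}(A\mid B) = \mathbb{P}(A,B)/\mathbb{P}(B)$, and then to apply Propositions~\ref{prop:marginal1} and~\ref{prop:marginal2} to each side.

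First I would write
$$\mathbb{P}\left(X_{C_j} \mid X_{S_{j,k}},\mathcal{E}\right) = \frac{\mathbb{P}\left(X_{C_j},X_{S_{j,k}},\mathcal{E}\right)}{\mathbb{P}\left(X_{S_{j,k}},\mathcal{E}\right)}.$$
Since $S_{j,k} = C_j \cap C_k \subset C_j$, specifying $X_{C_j}$ already fixes $X_{S_{j,k}}$, so the joint event in the numerator collapses to $\{X_{C_j},\mathcal{E}\}$, and the numerator equals $\mathbb{P}(X_{C_j},\mathcal{E})$. This small bookkeeping point is really the only subtlety of the argument.

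Next I would invoke Proposition~\ref{prop:marginal2} to rewrite the numerator as
$$\Phi_j\left(X_{C_j}\right)\prod_{i\in\text{n}(j)} M_{i\rightarrow j}\left(X_{S_{i,j}}\right),$$
and Proposition~\ref{prop:marginal1} to rewrite the denominator as $M_{j\rightarrow k}(X_{S_{j,k}})\,M_{k\rightarrow j}(X_{S_{j,k}})$. Since $k\in\text{n}(j)$ and $S_{k,j}=S_{j,k}$, the factor $M_{k\rightarrow j}(X_{S_{j,k}})$ appears in both the product of the numerator and the denominator and therefore cancels, leaving exactly the right-hand side of the claim.

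There is no genuinely hard step here: the statement is a direct consequence of the two preceding marginal propositions together with the observation $S_{j,k}\subset C_j$ that justifies dropping the redundant $X_{S_{j,k}}$ from the joint. If anything needs emphasis in the write-up, it is the identification of $M_{k\rightarrow j}$ as a term of the product $\prod_{i\in\text{n}(j)}M_{i\rightarrow j}$ that makes the cancellation transparent.
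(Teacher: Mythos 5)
Your proof is correct and follows exactly the paper's own argument, which is simply ``divide Eq.~(\ref{eq:marginal2}) by Eq.~(\ref{eq:marginal1})'' and cancel the common factor $M_{k\rightarrow j}\left(X_{S_{j,k}}\right)$; you have merely spelled out the bookkeeping (that $S_{j,k}\subset C_j$ makes the numerator collapse to $\mathbb{P}\left(X_{C_j},\mathcal{E}\right)$) which the paper leaves implicit.
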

\begin{proof}
Immediate by dividing Eq.~(\ref{eq:marginal2}) by Eq.~(\ref{eq:marginal1}).
\end{proof}

\begin{example}
In the particular case of the precipitation HMM, we get:
\begin{itemize}
\item for all $i=1\ldots n-2$, 
$$
\mathbb{P}(S_{i+1}|S_{i},Y_{1:n}=y_{1:n})=
\frac{\mathbb{P}(S_{i+1}|S_{i})\mathbb{P}(Y_{i+1}=y_{i+1}|S_{i+1})M_{i \rightarrow i-1}\left(S_{i-1}\right)}
{M_{i+1 \rightarrow i}\left(S_{i}\right)}$$
 which is exactly Eq.~(\ref{eq:hmm_forward_sampling});
\item for all $i=2\ldots n-1$, 
$$
\mathbb{P}(S_{i-1}|S_i,Y_{1:n}=y_{1:n})=
\frac{\mathbb{P}(S_i|S_{i-1})\mathbb{P}(Y_i=y_i|S_i)M_{i-1 \rightarrow i}\left(S_{i-1}\right)}
{M_{i \rightarrow i+1}\left(S_{i}\right)}$$
 which is exactly Eq.~(\ref{eq:hmm_backward_sampling}).
\end{itemize}
Both formulas allows to sample from $\mathbb{P}(S_{1:n} | Y_{1:n}=y_{1:n})$ sequentially (either in the forward or backward direction). Like for the recursions in previous section, this is due to the particular structure of the JT (a sequence) and a more subtle sampling algorithm is necessary in general.
\end{example}

\begin{proposition}[sampling]
For any root $r \in \mathcal{U}$, a sample from $\mathbb{P}(X_\mathcal{U} | \mathcal{E})$ is recursively obtained by calling  ${\tt inward}(r)$ and then ${\tt sample}(r)$ with
\begin{itemize}
\item ${\tt sample}(i)$: draw $\mathbb{P}(X_{C_i} | X_{S_{i,\text{pa}(i)}}, \mathcal{E})$ and {\bf for all} $j$ offspring of $i$  call ${\tt sample}(j)$
\end{itemize}
where $\text{pa}(i)$ denotes the parent of $i$ in the rooted JT and $S_{r,\text{pa}(r)}=\emptyset$ by convention.
\end{proposition}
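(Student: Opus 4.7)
The proof proceeds by induction on the rooted tree structure. The key intuition is that, by the running intersection property, the variables $X_{S_{j,\text{pa}(j)}}$ separate the subtree rooted at $j$ from the rest of the junction tree, which justifies a chain-rule decomposition of $\mathbb{P}(X_\mathcal{U} | \mathcal{E})$ matching exactly the recursive sampling order.

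For each cluster $i$ let $D_i$ denote the set of clusters in the subtree rooted at $i$ and $T_i = \bigcup_{j \in D_i} C_j$ the variables it covers. I would first establish, using running intersection, that any variable appearing both in $T_i$ and in $\mathcal{U} \setminus T_i$ must lie in $S_{i,\text{pa}(i)}$: the path between an interior and an exterior cluster necessarily passes through $i$ and $\text{pa}(i)$. Combined with the fact that $\text{cl}(u) \in D_k$ forces $\text{fa}(u) \subset T_k$, this implies that for each offspring $k$ of $i$ the product $\prod_{j \in D_k} \Phi_j(X_{C_j})$ depends only on $X_{T_k}$, and summing over $X_{T_k \setminus C_i}$ collapses each such factor into an inward message $M_{k \to i}(X_{S_{k,i}})$ via Lemma~\ref{lemma:messages}.

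The inductive claim to prove is: for every cluster $i$ and every $X_{S_{i,\text{pa}(i)}}$ consistent with $\mathcal{E}$,
\[
\mathbb{P}\bigl(X_{T_i \setminus S_{i,\text{pa}(i)}} \,\big|\, X_{S_{i,\text{pa}(i)}}, \mathcal{E}\bigr) = \mathbb{P}\bigl(X_{C_i} \,\big|\, X_{S_{i,\text{pa}(i)}}, \mathcal{E}\bigr) \prod_{k} \mathbb{P}\bigl(X_{T_k \setminus S_{k,i}} \,\big|\, X_{S_{k,i}}, \mathcal{E}\bigr),
\]
with the product taken over offspring $k$ of $i$. This follows from the subtree factorization above: dividing the joint by the normalizing factor $\mathbb{P}(X_{S_{i,\text{pa}(i)}}, \mathcal{E})$ and identifying each piece using Proposition~\ref{prop:marginal1}, Proposition~\ref{prop:marginal2}, and the preceding corollary on $\mathbb{P}(X_{C_j} | X_{S_{j,k}}, \mathcal{E})$. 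Unfolding the recursion, $\text{sample}(r)$ draws $X_{C_r}$ from $\mathbb{P}(X_{C_r} | \mathcal{E})$ (since $S_{r,\text{pa}(r)} = \emptyset$ by convention) and then recursively samples each offspring, so an easy induction on tree depth shows that the collection of sampled values has joint law $\mathbb{P}(X_\mathcal{U} | \mathcal{E})$.

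The main technical obstacle is verifying the conditional independence that justifies sampling $X_{C_i}$ only against $X_{S_{i,\text{pa}(i)}}$ rather than against every variable already drawn along the path from the root. This is precisely where running intersection does the work: it ensures that no potential $K_u$ with $\text{cl}(u) \in D_i$ involves any variable outside $T_i$ beyond those already captured by $S_{i,\text{pa}(i)}$, so that the subtree density is a function only of $X_{T_i}$ and its conditional on $X_{S_{i,\text{pa}(i)}}$ absorbs all dependence on the previously sampled variables.
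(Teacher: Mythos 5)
Your proof is correct, but it is genuinely more than what the paper offers: the paper's ``proof'' of this proposition is a single sentence (``the proof is the same as for the inward recursion''), which, combined with the outsourced proof of the inward--outward proposition, leaves the actual argument to classical textbooks. What you supply is the missing telescoping argument: the factorization $\mathbb{P}(X_\mathcal{U}\,|\,\mathcal{E})=\prod_{j}\mathbb{P}\bigl(X_{C_j}\,\big|\,X_{S_{j,\text{pa}(j)}},\mathcal{E}\bigr)$ along the rooted JT, proved by induction on subtrees. Your key identity checks out: writing $g_i(X_{T_i})=\prod_{u:\,\text{cl}(u)\in D_i}K_u(X_{\text{fa}(u)})=\Phi_i(X_{C_i})\prod_k g_k(X_{T_k})$ and noting (via running intersection) that $T_k\cap C_i=S_{k,i}$, that sibling subtrees only overlap inside $C_i$, and that $T_k\setminus C_i$ coincides with $V_{k\rightarrow i}$ so that $\sum_{X_{T_k\setminus S_{k,i}}}g_k=M_{k\rightarrow i}$ by Lemma~\ref{lemma:messages}, one gets $\mathbb{P}\bigl(X_{T_i\setminus S_{i,\text{pa}(i)}}\,\big|\,X_{S_{i,\text{pa}(i)}},\mathcal{E}\bigr)=g_i/M_{i\rightarrow\text{pa}(i)}$, and your displayed recursion follows from the preceding corollary $\mathbb{P}\bigl(X_{C_i}\,\big|\,X_{S_{i,\text{pa}(i)}},\mathcal{E}\bigr)=\Phi_i\prod_k M_{k\rightarrow i}/M_{i\rightarrow\text{pa}(i)}$ after cancelling the inward messages. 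Unfolding at the root (where $T_r=\mathcal{U}$ by covering and $S_{r,\text{pa}(r)}=\emptyset$) gives exactly the law simulated by ${\tt sample}(r)$; the running-intersection remark also guarantees that the only already-drawn variables of $C_i$ are those of $S_{i,\text{pa}(i)}$, so the procedure is well defined and consistent across branches. What the paper's terse pointer buys is brevity, since the needed conditional is already in the corollary and the tree traversal is standard; what your route buys is a self-contained proof in the spirit of the paper's ``explicit message'' program. The only points worth tightening are the positivity caveat (you condition only on separator values ``consistent with $\mathcal{E}$'', i.e.\ reachable by the sampler, which is the right fix) and making explicit the two set identities ($T_k\cap C_i=S_{k,i}$ and $T_k\setminus S_{k,i}=V_{k\rightarrow i}$) that license the appeal to Lemma~\ref{lemma:messages}.
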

\begin{proof}
The proof is the same than for the inward recursion.
\end{proof}

\begin{example}
For the pedigree BNT, sampling from $\mathbb{P}(X_{1:10} | \mathcal{E})$ is achieved through:
\begin{itemize}
\item sample $(X_1,X_2,X_3,X_4)$ from $\mathbb{P}(X_1,X_2,X_3,X_4|\mathcal{E})={\displaystyle\frac{\mathbb{P}(X_1,X_2,X_3,X_4,\mathcal{E})}{\mathbb{P}(\mathcal{E})}}$;
\item sample $X_9$ from $\mathbb{P}(X_9|X_3,X_4,\mathcal{E})={\displaystyle\frac{M_{3\rightarrow 2}(X_4,X_9)M_{4\rightarrow 2}(X_3,X_9)}{M_{2\rightarrow 1}(X_3,X_4)}}$;
\item sample $X_6$ from $\mathbb{P}(X_6|X_4,X_9,\mathcal{E})={\displaystyle\frac{K_6(X_6)K_9(X_4,X_6,X_9)}{M_{3\rightarrow 2}(X_4,X_9)}}$;
\item sample $X_7$ from $\mathbb{P}(X_7|X_3,X_9,\mathcal{E})={\displaystyle\frac{M_{5\rightarrow 4}(X_7,X_9)M_{6\rightarrow 4}(X_3,X_7)}{M_{4\rightarrow 2}(X_3,X_9)}}$;
\item sample $X_{10}$ from $\mathbb{P}(X_{10}|X_7,X_9,\mathcal{E})={\displaystyle\frac{K_{10}(X_7,X_9,X_{10})}{M_{5\rightarrow 4}(X_7,X_9)}}$;
\item sample $X_{5}$ from $\mathbb{P}(X_{5}|X_3,X_7,\mathcal{E})={\displaystyle\frac{K_5(X_5)K_7(X_3,X_5,X_7)M_{7\rightarrow 6}(X_3,X_5)}{M_{6\rightarrow 4}(X_3,X_7)}}$;
\item sample $X_{8}$ from $\mathbb{P}(X_{8}|X_3,X_5,\mathcal{E})={\displaystyle\frac{K_8(X_3,X_5,X_8)}{M_{7\rightarrow 6}(X_3,X_5)}}$;
\end{itemize}
We can see on Table~\ref{tab:ped_samples} five samples drawn from $\mathbb{P}(X_{1:10} | \mathcal{E})$ using these conditional probabilities.
\end{example}

One should note that it also possible to sample from $\mathbb{P}(X_\mathcal{V} | \mathcal{E})$ for any $\mathcal{V} \subset \mathcal{U}$ in a slightly more efficient way by restraining the sampling recursion to a subtree of the JT.

\begin{table}[t]
\caption{Five samples drawn from $\mathbb{P}(X_{1:10} | \mathcal{E})$. The marginal posterior distribution of each variable is also given.}
\label{tab:ped_samples}

\vspace{-1em}
$$
{
\setlength\arraycolsep{5pt}
\begin{array}{ccccccccccc}
\hline
\text{variable} & X_1 & X_2 & X_3 & X_4 & X_5 & X_6 & X_7 & X_8 & X_9 & X_{10} \\
\hline 
\text{sample 1} & {\tt dD} &  {\tt DD} &  {\tt dD} &  {\tt DD} &  {\tt DD} &  {\tt dD} &  {\tt dD} &  {\tt DD} &  {\tt DD} &  {\tt DD} \\ 
\text{sample 2} & {\tt dD} &  {\tt DD} &  {\tt DD} &  {\tt DD} &  {\tt dD} &  {\tt dD} &  {\tt dD} &  {\tt DD} &  {\tt DD} &  {\tt DD} \\ 
\text{sample 3} & {\tt DD} &  {\tt DD} &  {\tt DD} &  {\tt DD} &  {\tt dD} &  {\tt dd} &  {\tt dD} &  {\tt DD} &  {\tt dD} &  {\tt DD} \\ 
\text{sample 4} & {\tt dD} &  {\tt DD} &  {\tt dD} &  {\tt DD} &  {\tt DD} &  {\tt dd} &  {\tt dD} &  {\tt DD} &  {\tt dD} &  {\tt DD} \\ 
\text{sample 5} & {\tt dD} &  {\tt DD} &  {\tt dD} &  {\tt DD} &  {\tt dD} &  {\tt dd} &  {\tt dD} &  {\tt DD} &  {\tt dD} &  {\tt DD} \\
\hline
\mathbb{P}(X_\cdot = {\tt dd} | \mathcal{E}) & 0.00 & 0.00 & 0.00 & 0.00 & 0.00 & 0.53 & 0.00 & 0.00 & 0.00 &  0.00 \\
\mathbb{P}(X_\cdot = {\tt dD} | \mathcal{E}) & 0.76 & 0.00 & 0.29 & 0.00 & 0.94 & 0.40 & 1.00 & 0.00 & 0.67 &  0.00\\
\mathbb{P}(X_\cdot = {\tt DD} | \mathcal{E}) & 0.24 & 1.00 & 0.71 & 1.00 & 0.06 & 0.07 & 0.00 & 1.00 & 0.33 &  1.00 \\
\hline
\end{array}
}
$$
\end{table}

\section{Discussion}

We have introduced here with Eq.~(\ref{eq:messagedef0}) an explicit definition of messages in BNTs. To the best of our knowledge, this surprisingly seems to be the first time. Indeed, when looking either in the founding papers and textbooks where exact BP was initially developed \citep{pearl86,pearl88,lauritzen88,shafer90,jensen90bis,jensen90}, or in the most recent work on the subject \citep{jensen07,koller09,tarlow2010,caetano11}, messages are always defined implicitly through the recursive formula of Eq.~(\ref{eq:recursion}). This might be due to the fact that the popular approximated BP algorithms (ex: loopy BP) are all based on similar recursive formulas.

However, the explicit message-centered approach that we suggest here has several advantages over the classical approach of exact BP. Firstly, it follows the sketch of the theory of inference in HMMs allowing to introduce BNTs as a natural extension of these well-known models from definitions to proofs, with obvious pedagogical benefits. Secondly, it provides a compact and straightforward proof of all exact BP results (the only steps which require some work are Lemma~\ref{lemma:messages}, Proposition~\ref{prop:marginal1}, and Proposition~\ref{prop:marginal2}). Finally, it extends a step further the parallelism pointed out by \citet{smyth97} between Markov sequence related models (Markov chains, HMMs, Markov tree) and BNTs therefore opening new exciting possibilities for those who work with HMMs models and variants without having to refer to the general theory of exact BP in BNTs to prove the resulting formulas.

For example, suppose we consider $X_{1:n}$ an homogeneous Markov chain with starting distribution $\mu$ and transition matrix $\pi$, and would like to sample from $\mathbb{P}(X_{1:n} | X_1=X_n)$. By introducing an appropriate BNT (left to the reader), we can easily establish that $\mathbb{P}(X_1 | X_1=X_n) \propto \mu(X_1)\pi^{n-1}(X_1,X_1)$ and that
$\mathbb{P}(X_{i} | X_{i-1},X_1,X_1=X_n) \propto {\pi(X_{i-1},X_{i})\pi^{n-i}(X_i,X_1)}$ for all $i=2\ldots n-1$. Of course, this result can be obtained directly without introducing any BNT, but our message-centered approach provides without effort a complete sketch of the proof. This might prove itself very useful when working with sophisticated extension of Markov sequence related models (ex: HMMs with partially observed hidden states, complex dependencies, or multiple observations; evolutionary processes through Markov trees including loops, etc.).

For further work, it would be interesting to extend our approach to more general propagation than the sum-product one we consider here. For example, max-product propagation can be easily considered by replacing sums by maximums in Eq.~(\ref{eq:messagedef}), thus giving the following max-message definition:
\begin{equation}
	M_{i \rightarrow j}^\text{max} \left(X_{S_{i,j}}\right)\stackrel{\text{def}}{=}
		\mathbf{1}_{\mathcal{E}_{L_{i\rightarrow j}}}%
		\max_{X_{{V_{i\rightarrow j}}} \in \mathcal{X}_{{V_{i\rightarrow j}}}}
	\mathbb{P}\left(
	X_{L_{i\rightarrow j}},
	X_{{V_{i\rightarrow j}}}
	\left| X_{L_{j\rightarrow i}} \right.
	\right)
\end{equation}
from which all max-product propagation results can be easily derived.

\appendix

\section*{Appendix}

\section{R source code for the precipitation HMM}

\begin{verbatim}
# generates the data
pi=matrix(c(0.7,0.3,0.1,0.9),ncol=2,byrow=T);
n=100;
s=numeric(n);
s[1]=2;
for (i in 2:100) s[i]=which(rmultinom(1,size=1,prob=pi[s[i-1],])==1);
lambda=c(3.0,0.5);
x=rpois(n,lambda=lambda[s]);
plot(x);
index=1:n;
points(index[s==1],x[s==1],col="blue");
points(index[s==2],x[s==2],col="red");

# forward and backward recursions
e=rbind(dpois(x,lambda[1]),dpois(x,lambda[2]));
F=0*e; B=0*e;
F[2,1]=e[2,1];
for (i in 2:n) F[,i]=t(F[,i-1]%*%pi)*e[,i];
B[,n]=1;
for (i in seq(n-1,1,by=-1)) B[,i]=pi%*%(e[,i+1]*B[,i+1]);

# marginal distribution
marginal=B*F/sum(B[,1]*F[,1]);
plot(marginal[1,],t='l',col="blue",lwd=2);
points(marginal[2,],t='l',col="red",lwd=2);
points(s==1,col="blue");
points(s==2,col="red");

# sampling from P(S|X)
sample=NULL;
for (iter in 1:5) {
  ss=numeric(n);
  ss[1]=2;
  for (i in 2:100) ss[i]=which(rmultinom(1,size=1,
  	prob=pi[ss[i-1],]/B[ss[i-1],i-1]*e[,i]*B[,i])==1);
  sample=rbind(sample,ss);
}
\end{verbatim}

\section{R source code for the pedigree BNT}

\begin{verbatim}
# define the model
p=0.2
Pf=c((1-p)^2,2*p*(1-p),p^2);
Pnf=matrix(rep(NA,27),nrow=3);
Pnf[1,]=c(1,0.5,0,0.5,0.25,0,0,0,0);
Pnf[2,]=c(0,0.5,1,0.5,0.5,0.5,1,0.5,0);
Pnf[3,]=c(0,0,0,0,0.25,0.5,0,0.5,1);
pair=function(X1,X2) 3*(X1-1)+X2;
K1=function(X1) Pf[X1];
K2=function(X2) (X2==3)*Pf[X2];
K3=function(X1,X2,X3) Pnf[X3,pair(X1,X2)];
K4=function(X1,X2,X4) (X4==3)*Pnf[X4,pair(X1,X2)];
K5=function(X5) Pf[X5];
K6=function(X6) Pf[X6];
K7=function(X3,X5,X7) (X7!=3)*Pnf[X7,pair(X3,X5)];
K8=function(X3,X5,X8) (X8==3)*Pnf[X8,pair(X3,X5)];
K9=function(X4,X6,X9) Pnf[X9,pair(X4,X6)];
K10=function(X7,X9,X10) (X10==3)*Pnf[X10,pair(X7,X9)];

# inward
M76=rep(0,9);
for (X3 in 1:3) for (X5 in 1:3) for (X8 in 1:3)
  M76[pair(X3,X5)]=M76[pair(X3,X5)]+K8(X3,X5,X8);
M64=rep(0,9);
for (X3 in 1:3) for (X7 in 1:3) for (X5 in 1:3)
  M64[pair(X3,X7)]=M64[pair(X3,X7)]+K5(X5)*K7(X3,X5,X7)*M76[pair(X3,X5)];
M54=rep(0,9);
for (X7 in 1:3) for (X9 in 1:3) for (X10 in 1:3)
  M54[pair(X7,X9)]=M54[pair(X7,X9)]+K10(X7,X9,X10);
M42=rep(0,9);
for (X3 in 1:3) for (X9 in 1:3) for (X7 in 1:3)
  M42[pair(X3,X9)]=M42[pair(X3,X9)]+M54[pair(X7,X9)]*M64[pair(X3,X7)];
M32=rep(0,9);
for (X4 in 1:3) for (X9 in 1:3) for (X6 in 1:3)
  M32[pair(X4,X9)]=M32[pair(X4,X9)]+K6(X6)*K9(X4,X6,X9);
M21=rep(0,9);
for (X3 in 1:3) for (X4 in 1:3) for (X9 in 1:3)
  M21[pair(X3,X4)]=M21[pair(X3,X4)]+M32[pair(X4,X9)]*M42[pair(X3,X9)];

# outward
M12=rep(0,9);
for (X3 in 1:3) for (X4 in 1:3) for (X1 in 1:3) for (X2 in 1:3)
  M12[pair(X3,X4)]=M12[pair(X3,X4)]+K1(X1)*K2(X2)*K3(X1,X2,X3)*K4(X1,X2,X4);
M23=rep(0,9);
for (X4 in 1:3) for (X9 in 1:3) for (X3 in 1:3)
  M23[pair(X4,X9)]=M23[pair(X4,X9)]+M12[pair(X3,X4)]*M42[pair(X3,X9)];
M24=rep(0,9);
for (X3 in 1:3) for (X9 in 1:3) for (X4 in 1:3)
  M24[pair(X3,X9)]=M24[pair(X3,X9)]+M12[pair(X3,X4)]*M32[pair(X4,X9)];
M45=rep(0,9);
for (X7 in 1:3) for (X9 in 1:3) for (X3 in 1:3)
  M45[pair(X7,X9)]=M45[pair(X7,X9)]+M24[pair(X3,X9)]*M64[pair(X3,X7)];
M46=rep(0,9);
for (X3 in 1:3) for (X7 in 1:3) for (X9 in 1:3)
  M46[pair(X3,X7)]=M46[pair(X3,X7)]+M24[pair(X3,X9)]*M54[pair(X7,X9)];
M67=rep(0,9);
for (X3 in 1:3) for (X5 in 1:3) for (X7 in 1:3)
  M67[pair(X3,X5)]=M67[pair(X3,X5)]+K5(X5)*K7(X3,X5,X7)*M46[pair(X3,X7)];
pevidence=sum(M12*M21);
pevidence=sum(M67*M76);
print(rbind(M76,M64,M54,M42,M32,M21),digits=12);
print(rbind(M12,M23,M24,M45,M46,M67)*1000,digits=12);

# marginal distributions
P1=rep(0,3);
for (X1 in 1:3) for (X2 in 1:3) for (X3 in 1:3) for (X4 in 1:3)
 P1[X1]=P1[X1]+K1(X1)*K2(X2)*K3(X1,X2,X3)*K4(X1,X2,X4)*M21[pair(X3,X4)];
P2=rep(0,3);
for (X1 in 1:3) for (X2 in 1:3) for (X3 in 1:3) for (X4 in 1:3)
 P2[X2]=P2[X2]+K1(X1)*K2(X2)*K3(X1,X2,X3)*K4(X1,X2,X4)*M21[pair(X3,X4)];
P3=rep(0,3);
for (X1 in 1:3) for (X2 in 1:3) for (X3 in 1:3) for (X4 in 1:3)
 P3[X3]=P3[X3]+K1(X1)*K2(X2)*K3(X1,X2,X3)*K4(X1,X2,X4)*M21[pair(X3,X4)];
P4=rep(0,3);
for (X1 in 1:3) for (X2 in 1:3) for (X3 in 1:3) for (X4 in 1:3)
 P4[X4]=P4[X4]+K1(X1)*K2(X2)*K3(X1,X2,X3)*K4(X1,X2,X4)*M21[pair(X3,X4)];
P5=rep(0,3);
for (X3 in 1:3) for (X5 in 1:3)
  P5[X5]=P5[X5]+M67[pair(X3,X5)]*M76[pair(X3,X5)];
P6=rep(0,3);
for (X4 in 1:3) for (X6 in 1:3) for (X9 in 1:3)
  P6[X6]=P6[X6]+K6(X6)*K9(X4,X6,X9)*M23[pair(X4,X9)];
P7=rep(0,3);
for (X7 in 1:3) for (X9 in 1:3)
  P7[X7]=P7[X7]+M45[pair(X7,X9)]*M54[pair(X7,X9)];
P8=rep(0,3);
for (X3 in 1:3) for (X5 in 1:3) for (X8 in 1:3)
 P8[X8]=P8[X8]+K8(X3,X5,X8)*M67[pair(X3,X5)];
P9=rep(0,3);
for (X4 in 1:3) for (X6 in 1:3) for (X9 in 1:3)
  P9[X9]=P9[X9]+K6(X6)*K9(X4,X6,X9)*M23[pair(X4,X9)];
P10=rep(0,3);
for (X7 in 1:3) for (X9 in 1:3) for (X10 in 1:3)
 P10[X10]=P10[X10]+K10(X7,X9,X10)*M45[pair(X7,X9)];

# sampling
P13=rep(0,9);
for (X1 in 1:3) for (X2 in 1:3) for (X3 in 1:3) for (X4 in 1:3)
 P13[pair(X1,X3)]=P13[pair(X1,X3)]+K1(X1)*K2(X2)*
 	K3(X1,X2,X3)*K4(X1,X2,X4)*M21[pair(X3,X4)];
sample=matrix(rep(NA,5*10),nrow=5);
for (iter in 1:5) {
  sample[iter,2]=3;
  sample[iter,4]=3;
  sample[iter,8]=3;
  sample[iter,10]=3;
  aux=which(rmultinom(1, size=1, prob=P13/pevidence)==1);
  sample[iter,1]=floor((aux-1)/3)+1;
  sample[iter,3]=aux-3*floor((aux-1)/3);
  CP9=rep(NA,3);
  for (X9 in 1:3) {
    CP9[X9]=M32[pair(sample[iter,4],X9)]*M42[pair(sample[iter,3],X9)]/
    	M21[pair(sample[iter,3],sample[iter,4])];
  }
  sample[iter,9]=which(rmultinom(1, size=1, prob=CP9)==1);
  CP6=rep(NA,3);
  for (X6 in 1:3) {
    CP6[X6]=K6(X6)*K9(sample[iter,4],X6,sample[iter,9])/
    	M32[pair(sample[iter,4],sample[iter,9])];
  }
  sample[iter,6]=which(rmultinom(1, size=1, prob=CP6)==1);
  CP7=rep(NA,3);
  for (X7 in 1:3) {
    CP7[X7]=M54[pair(X7,sample[iter,9])]*M64[pair(sample[iter,3],X7)]/
    	M42[pair(sample[iter,3],sample[iter,9])];
  }
  sample[iter,7]=which(rmultinom(1, size=1, prob=CP7)==1);
  CP5=rep(NA,3);
  for (X5 in 1:3) {
    CP5[X5]=K5(X5)*K7(sample[iter,3],X5,sample[iter,7])*
    	M76[pair(sample[iter,3],X5)]/M64[pair(sample[iter,3],sample[iter,7])];
  }
  sample[iter,5]=which(rmultinom(1, size=1, prob=CP5)==1);
}
\end{verbatim}

\bibliographystyle{plainnat}

\bibliography{message}

\begin{thebibliography}{19}
\providecommand{\natexlab}[1]{#1}
\providecommand{\url}[1]{\texttt{#1}}
\expandafter\ifx\csname urlstyle\endcsname\relax
  \providecommand{\doi}[1]{doi: #1}\else
  \providecommand{\doi}{doi: \begingroup \urlstyle{rm}\Url}\fi

\bibitem[Arnborg et~al.(1987)Arnborg, Corneil, and Prosckurowski]{arnborg87}
S.~Arnborg, D.~G. Corneil, and A.~Prosckurowski.
\newblock Complexity of finding embedding in a k-tree.
\newblock \emph{SIAM J. Alg. Disc. Math.}, 8:\penalty0 277--284, 1987.

\bibitem[Becker and Geiger(1996)]{becker96}
A.~Becker and D.~Geiger.
\newblock A sufficiently fast algorithm for finding close to optimal junction
  trees.
\newblock In \emph{Proc. 12th Conference on Uncertainty in Artificial
  Intelligence (UAI-96)}, pages 81--89, 1996.

\bibitem[Caetano and McAuley(2011)]{caetano11}
T.~S. Caetano and J.~J. McAuley.
\newblock Faster algorithms for max-product message-passing.
\newblock \emph{Journal of Machine Learning Research}, 12\penalty0
  (4):\penalty0 1349--1388, 2011.

\bibitem[Cowell et~al.(1999)Cowell, Dawid, Lauritzen, and
  Spiegelhalter]{cowell99}
R.~G. Cowell, P.~Dawid, S.~L. Lauritzen, and D.~J. Spiegelhalter.
\newblock \emph{Probabilistic Networks and Expert Systems}.
\newblock Springer, 1999.

\bibitem[Durbin et~al.(1998)Durbin, Eddy, Krogh, and Mitchison]{durbin98}
R.~Durbin, S.~R. Eddy, A.~Krogh, and G.~J. Mitchison.
\newblock \emph{Biological Sequence Analysis: Probabilistic Models of Proteins
  and Nucleic Acids}.
\newblock Cambridge University Press, 1998.

\bibitem[Jensen and Nielsen(2007)]{jensen07}
F.~B. Jensen and T.~Nielsen.
\newblock \emph{Bayesian Networks and Decision Graphs 2nd ed.}
\newblock Springer, 2007.

\bibitem[Jensen and Jensen(1994)]{jensen94}
F.~V. Jensen and F.~Jensen.
\newblock Optimal junction trees.
\newblock In \emph{Proc. 10th Conference on Uncertainty in Artificial
  Intelligence (UAI-94)}, 1994.

\bibitem[Jensen et~al.(1990{\natexlab{a}})Jensen, Lauritzen, and
  Olesen]{jensen90bis}
F.~V. Jensen, S.~L. Lauritzen, and Olesen.
\newblock Bayesian updating in causal probabilistic networks by local
  computation.
\newblock \emph{Computational Statistics Quaterly}, 4:\penalty0 269--282,
  1990{\natexlab{a}}.

\bibitem[Jensen et~al.(1990{\natexlab{b}})Jensen, Olesen, and
  Andersen]{jensen90}
F.~V. Jensen, K.~G. Olesen, and S.~K. Andersen.
\newblock An algebra of bayesian belief universes for knowledge-based systems.
\newblock \emph{Networks}, 20\penalty0 (5):\penalty0 637--659,
  1990{\natexlab{b}}.

\bibitem[Koller and Friedman(2009)]{koller09}
D.~Koller and N.~Friedman.
\newblock \emph{Probabilistic Graphical Models}.
\newblock MIT Press, Cambridge, 2009.

\bibitem[Lauritzen and Spiegelhalter(1988)]{lauritzen88}
S.~L. Lauritzen and D.~J. Spiegelhalter.
\newblock Local computations with probabilities on graphical structures and
  their application to expert systems.
\newblock \emph{Journal of the Royal Statistical Society. Series B
  (Methodological)}, 50\penalty0 (2):\penalty0 pp. 157--224, 1988.

\bibitem[Lepar and Shenoy(1998)]{lepar98}
V.~Lepar and P.~P. Shenoy.
\newblock A comparison of lauritzen-spiegelhalter, hugin, and shenoy-shafer
  architectures for computing marginals of probability distributions.
\newblock In \emph{Proceedings of the 14th Conference on Uncertainty in
  Artificial Intelligence (UAI-98)}, pages 328--337. Morgan Kaufmann, 1998.

\bibitem[Pearl(1986)]{pearl86}
J.~Pearl.
\newblock Fusion, propagation and structuring in belief networks.
\newblock \emph{Artificial Intelligence}, 29:\penalty0 241--288, 1986.

\bibitem[Pearl(1988)]{pearl88}
J.~Pearl.
\newblock \emph{{Probabilistic Reasoning in Intelligent Systems: Networks of
  Plausible Inference}}.
\newblock Morgan Kaufmann, 1988.

\bibitem[Schmidt and Shenoy(1998)]{schmidt98}
T.~Schmidt and P.~P. Shenoy.
\newblock Some improvements to the shenoy-shafer and hugin architectures for
  computing marginals.
\newblock \emph{Artificial intelligence}, 102:\penalty0 323--333, 1998.

\bibitem[Shafer and Shenoy(1990)]{shafer90}
G.~R. Shafer and P.~P. Shenoy.
\newblock Probability propagation.
\newblock \emph{Annals of Mathematics and Artificial Intelligence}, 2:\penalty0
  327--351, 1990.

\bibitem[Shoiket and Geiger(1997)]{shoiket97}
K.~Shoiket and D.~Geiger.
\newblock A sufficiently fast algorithm for finding close to optimal junction
  trees.
\newblock In \emph{Proc. 13th Conference on Uncertainty in Artificial
  Intelligence (UAI-97)}, pages 185--190, 1997.

\bibitem[Smyth et~al.(1997)Smyth, Heckerman, and Jordan]{smyth97}
P.~Smyth, D.~Heckerman, and M.~I. Jordan.
\newblock Probabilistic independence networks for hidden markov probability
  models.
\newblock \emph{Neural Computation}, 9\penalty0 (2):\penalty0 227--269, 1997.

\bibitem[Tarlow et~al.(2010)Tarlow, Givoni, and Zemel]{tarlow2010}
D.~Tarlow, I.~E. Givoni, and R.~S. Zemel.
\newblock Hop-map: Efficient message passing with high order potentials.
\newblock \emph{Journal of Machine Learning Research - Proceedings Track},
  9:\penalty0 812--819, 2010.

\end{thebibliography}

\end{document}